\newcommand{\Z}{\mathbb{Z}}
\newcommand{\N}{\mathbb{N}}
\newcommand{\realm}{\mathfrak{D}}
\newcommand{\lang}{\mathcal{L}}
\newcommand{\glimset}{\tilde \omega}
\newcommand{\sftap}{\mathcal{S}}
\begin{document}

\title{Complexity of Generic Limit Sets of Cellular Automata}
\author{Ilkka T\"orm\"a\thanks{Research supported by Academy of Finland grant 295095}}
\institute{Department of Mathematics and Statistics \\ University of Turku \\ Finland \\ \email{iatorm@utu.fi}}

\maketitle

\begin{abstract}
The generic limit set of a topological dynamical system is the smallest closed subset of the phase space that has a comeager realm of attraction.
It intuitively captures the asymptotic dynamics of almost all initial conditions.
It was defined by Milnor and studied in the context of cellular automata, whose generic limit sets are subshifts, by Djenaoui and Guillon.
In this article we study the structural and computational restrictions that apply to generic limit sets of cellular automata.
As our main result, we show that the language of a generic limit set can be at most $\Sigma^0_3$-hard, and lower in various special cases.
We also prove a structural restriction on generic limit sets with a global period.

\keywords{Cellular automata \and Limit set \and Generic limit set \and Topological dynamics}
\end{abstract}

\section{Introduction}

One-dimensional cellular automata (CA for short) are discrete dynamical systems that act on the set $A^\Z$ of bi-infinite sequences by a local rule that is applied synchronously at every coordinate.
They can be used to model physical and biological phenomena, as well as massively parallel computation.

The limit set of a topological dynamical system $(X, T)$ consists of those points that can be seen arbitrarily late in its evolution.
Limit sets of cellular automata have been studied by various authors from the computational (e.g.~\cite{CuPaYu89,Ka92,BoCeVa14}) and structural (e.g.~\cite{Ma95,BaGuKa11}) points of view.
In~\cite{Mi85}, Milnor defined the likely and generic limit sets of a dynamical system.
The likely limit set associated to an invariant probability measure $\mu$ on $X$ is the smallest closed subset $C \subset X$ such that for $\mu$-almost every $x \in X$, all limit points of $(T^n(x))_{n \in \N}$ are in $C$.
The generic variant is a purely topological notion that replaces ``for $\mu$-almost every $x \in X$'' with ``for every $x$ in a comeager subset of $X$''.
As far as we know, generic limit sets have been studied relatively little in dynamical systems theory.

In~\cite{DjGu19}, Djenaoui and Guillon studied the generic limit sets of dynamical systems in general and \emph{directed cellular automata} (arbitrary paths in spacetime diagrams of CA) in particular.
They related dynamical properties of a given CA to the structure of its generic limit set in different directions and its relation to the set of equicontinuity points and the limit set.
For example, they proved that the generic limit set of an almost equicontinuous CA is exactly the closure of the asymptotic set of its equicontinuity points.
They also provide a combinatorial characterization of the generic limit set of a CA, which allows us to study its descriptional and structural complexity and carry out complex constructions.
This point of view was not present in~\cite{DjGu19}, where relatively simple examples of generic limit sets were provided to highlight the main classification results.

As our main result, we prove that the language of a generic limit set of a CA is always $\Sigma^0_3$, and present an example which is complete for this class.
If the generic limit set is minimal, then this bound cannot be attained, since its language is $\Sigma^0_2$.
We also prove that the dynamics of the CA on its generic limit set must be nontrivial in complex instances: if the CA is eventually periodic or strictly one-sided on the generic limit set, its language is $\Sigma^0_1$ or $\Pi^0_2$, respectively, and if the restriction is a shift map, then the generic limit set is chain-transitive.
These restrictions are proved by constructing ``semi-blocking words'' that restrict the flow of information.
Finally, we present a structural restriction for generic limit sets: if they consist of a finite number of two-way chain components for the shift map, then they cannot have a global period.


\section{Definitions}

Let $X$ be a topological space.
A subset of $X$ is comeager if it contains an intersection of countably many dense open sets. 


A dynamical system is a pair $(X, f)$ where $X$ is a compact metric space and $f : X \to X$ is a continuous function.
We say $(X, f)$ has trivial dynamics if $f = \mathrm{id}_X$.
The limit set of $f$ is $\Omega_f = \bigcap_{t \in \N} f^t(X)$.
For $x \in X$, we define $\omega(x)$ as the set of limit points of the forward orbit $(f^t(x))_{t \in \N}$, and $\omega(Y) = \bigcup_{y \in Y} \omega(y)$ for $Y \subset X$.
The realm (of attraction) of a subset $Y \subset X$ is $\realm(Y) = \{ x \in X \;|\; \omega(x) \subset Y \}$.
The generic limit set of $f$, denoted $\glimset(f)$, is the intersection of all closed subsets $C \subset X$ such that $\realm(C)$ is comeager; then $\glimset(f)$ itself has a comeager realm.

We consider one-dimensional cellular automata over a finite alphabet $A$.
The full shift $A^\Z$ is a compact metric space with the distance function $d(x,y) = \inf \{ 2^{-n} \;|\; x_{[-n,n]} = y_{[-n,n]} \}$, and the left shift map $\sigma : A^\Z \to A^\Z$, defined by $\sigma(x)_i = x_{i+1}$, is a homeomorphism.
The cylinder sets $[w]_i = \{ x \in A^\Z \;|\; x_{[i, i+|w|)} = w \}$ for $w \in A^*$ and $i \in \Z$ form a prebasis for the topology, and the clopen sets, which are the finite unions of cylinders, form a basis.
We denote $[w] = [w]_0$.
A subshift is a closed and $\sigma$-invariant set $X \subset A^\Z$.
Every subshift is defined by a set $F \subset A^*$ of forbidden words as $X = A^\Z \setminus \bigcup_{w \in F} \bigcup_{i \in \Z} [w]_i$, and if $F$ can be chosen finite, then $X$ is a shift of finite type (SFT).
The language of $X$ is defined as $\lang(X) = \{ w \in A^* \;|\; [w] \cap X \neq \emptyset \}$, and we denote $\lang_n(X) = \lang(X) \cap A^n$.
The order-$n$ SFT approximation of $X$ is the SFT $\sftap_n(X) \subset A^\Z$ defined by the forbidden patterns $A^n \setminus \lang_n(X)$.
We say $X$ is transitive if for all $u, v \in \lang(X)$ there exists $w \in A^*$ with $u w v \in \lang(X)$, and mixing if the length of $w$ can be chosen freely as long as it is large enough (depending on $u$ and $v$).
We say $X$ is chain transitive if each $\sftap_n(X)$ is transitive.
We say $X$ is minimal if it does not properly contain another subshift; this is equivalent to the condition that for every $w \in \lang(X)$ there exists $n \in \N$ such that $w$ occurs in each word of $\lang_n(X)$.

A morphism between dynamical systems $(X, f)$ and $(Y, g)$ is a continuous function $h : X \to Y$ with $h \circ f = g \circ h$.
If $h$ is surjective, $(Y, g)$ is a factor of $(X, f)$.
A cellular automaton is a morphism $f : (A^\Z, \sigma) \to (A^\Z, \sigma)$.
Equivalently, it is a function given by a local rule $F : A^{2r+1} \to A$ for some radius $r \in \N$ as $f(x)_i = F(x_{[i-r,i+r]})$.
The pair $(A^\Z, f)$ is a dynamical system.
Generic limit sets were defined by Milnor in \cite{Mi85} for general dynamical systems, and were first considered in the context of cellular automata in \cite{DjGu19}.

In this article, a Turing machine consists of a finite state set $Q$ with an initial state $q_0$ and a final state $q_f$, a tape alphabet $\Gamma$ that is used on a one-way infinite tape together with a special blank symbol $\bot \notin \Gamma$, and a transition rule $\delta$ that allows the machine to move on the tape and modify the tape cells and its internal state based on its current state and the contents of the tape cell it is on.
Turing machines can decide any computable language and compute any computable function in the standard way.

We give an overview of the arithmetical hierarchy.
A computable predicate over $\N$ is $\Pi^0_0$ and $\Sigma^0_0$.
If $\phi$ is a $\Pi^0_n$ predicate, then $\exists k_1 \cdots \exists k_m \phi$ is a $\Sigma^0_{n+1}$ formula, and conversely, if $\phi$ is $\Sigma^0_n$, then $\forall k_1 \cdots \forall k_m \phi$ is $\Pi^0_{n+1}$.
Subsets of $\N$ defined by these formulas are given the same classifications, and we extend them to all sets that are in a computable bijection with $\N$.
For these sets, we define $\Delta^0_n = \Pi^0_n \cap \Sigma^0_n$.
The computable sets form $\Delta^0_1$ and the computably enumerable sets form $\Sigma^0_1$.
A subshift is given the same classification as its language.

\section{Auxiliary results}

We begin with auxiliary results on generic limit sets of cellular automata that are used in several proofs.

\begin{lemma}[Proposition~4.11 in~\cite{DjGu19}]
\label{lem:Invariant}
Let $f$ be a CA.
Then $\glimset(f)$ is a nonempty $f$-invariant subshift.
\end{lemma}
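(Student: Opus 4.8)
The plan is to verify four things separately: that $\glimset(f)$ is closed, that it has a comeager realm of attraction (whence nonempty), that it is $\sigma$-invariant, and that it is $f$-invariant; the first two together with $\sigma$-invariance give that it is a subshift. Closedness is immediate, as $\glimset(f)$ is by definition an intersection of closed sets. For the comeager realm, write $\mathcal{F} = \{ C \subset A^\Z : C \text{ closed and } \realm(C) \text{ comeager} \}$, so that $\glimset(f) = \bigcap_{C \in \mathcal{F}} C$. The crucial observation is that $A^\Z$ is second countable, hence Lindelöf, so the open cover $\{ A^\Z \setminus C \}_{C \in \mathcal{F}}$ of the open set $\bigcup_{C \in \mathcal{F}}(A^\Z \setminus C)$ has a countable subcover; taking complements yields $C_1, C_2, \ldots \in \mathcal{F}$ with $\bigcap_n C_n = \glimset(f)$. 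Since $\omega(x) \subset \bigcap_n C_n$ exactly when $\omega(x) \subset C_n$ for every $n$, one gets $\realm(\glimset(f)) = \bigcap_n \realm(C_n)$, a countable intersection of comeager sets, which is comeager; this justifies the parenthetical claim in the definition. In particular $\realm(\glimset(f))$ is dense, so it contains some $x$, and by compactness of $A^\Z$ the orbit $(f^t(x))_t$ has a convergent subsequence, so $\emptyset \neq \omega(x) \subset \glimset(f)$.

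For $\sigma$-invariance I would use that a CA commutes with the shift, so $f^t \circ \sigma = \sigma \circ f^t$ for all $t$ and the forward orbit of $\sigma(x)$ is the $\sigma$-image of that of $x$. As $\sigma$ is a homeomorphism it carries limit points to limit points, giving $\omega(\sigma(x)) = \sigma(\omega(x))$ and hence $\realm(\sigma(C)) = \sigma(\realm(C))$ for every $C$. Because $\sigma$ preserves closedness and, being a homeomorphism, preserves comeagerness, the map $C \mapsto \sigma(C)$ is a bijection of $\mathcal{F}$ onto itself, and injectivity of $\sigma$ lets it commute with intersections: $\sigma(\glimset(f)) = \bigcap_{C \in \mathcal{F}} \sigma(C) = \bigcap_{C \in \mathcal{F}} C = \glimset(f)$. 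Together with closedness this shows $\glimset(f)$ is a subshift.

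For $f$-invariance, set $G = \realm(\glimset(f))$. I first claim $\glimset(f) = \overline{\omega(G)}$, where $\omega(G) = \bigcup_{x \in G} \omega(x)$: the inclusion $\overline{\omega(G)} \subset \glimset(f)$ holds since each $\omega(x) \subset \glimset(f)$ and $\glimset(f)$ is closed, while $\overline{\omega(G)}$ is a closed set whose realm contains the comeager set $G$, so it lies in $\mathcal{F}$ and therefore contains $\glimset(f)$. Next I would invoke the standard fact that omega-limit sets of a continuous map on a compact space are strongly invariant, $f(\omega(x)) = \omega(x)$: the inclusion $\subset$ is clear, and for the reverse, given $y = \lim_k f^{t_k}(x)$ one passes to a convergent subsequence of $(f^{t_k - 1}(x))_k$ with limit $z \in \omega(x)$ satisfying $f(z) = y$. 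Since $f$ is continuous on the compact space $A^\Z$ it is a closed map with $f(\overline{S}) = \overline{f(S)}$, so
\[
f(\glimset(f)) = f\bigl(\overline{\omega(G)}\bigr) = \overline{f(\omega(G))} = \overline{\bigcup_{x \in G} f(\omega(x))} = \overline{\bigcup_{x \in G} \omega(x)} = \glimset(f).
\]

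The main obstacle is the passage from the possibly uncountable defining intersection to a countable subintersection via the Lindelöf property, which is exactly what makes the comeager realm (and hence nonemptiness) go through; the remaining claims are formal consequences of the commutation of $f$ with $\sigma$ and of the strong invariance of omega-limit sets.
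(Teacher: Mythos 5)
Your proof is correct, but it is worth noting that the paper offers no proof of this statement at all: it is quoted as Proposition~4.11 of the cited reference \cite{DjGu18}. What you have written is therefore a genuinely self-contained reconstruction rather than a variant of an argument in the text. All four steps check out. The Lindel\"of/second-countability reduction of the defining intersection to a countable one is exactly the right device to show $\realm(\glimset(f)) = \bigcap_n \realm(C_n)$ is comeager (and you correctly identify this as the only non-formal point), after which nonemptiness follows from the Baire property of the compact metric space $A^\Z$ together with sequential compactness giving $\omega(x) \neq \emptyset$. The $\sigma$-invariance argument via the bijection $C \mapsto \sigma(C)$ of the family of closed sets with comeager realm, using that a homeomorphism commuting with $f$ preserves realms and comeagerness and that injective maps commute with intersections, is clean. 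For $f$-invariance, the identity $\glimset(f) = \overline{\omega(G)}$ with $G = \realm(\glimset(f))$ (both inclusions argued correctly: one from closedness, the other from $\overline{\omega(G)}$ having realm containing the comeager set $G$) combined with strong invariance $f(\omega(x)) = \omega(x)$ and the fact that a continuous map on a compact Hausdorff space satisfies $f(\overline{S}) = \overline{f(S)}$ actually yields the stronger conclusion $f(\glimset(f)) = \glimset(f)$, which subsumes the invariance claimed in the lemma. This is essentially the content of the corresponding results in \cite{DjGu18}, and having it spelled out is a net gain for a reader who does not want to chase the reference.
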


The following result gives a combinatorial characterization for generic limit sets of cellular automata.

\begin{lemma}[Corollary of Remark~4.4 in \cite{DjGu19}]
\label{lem:CombChar}
Let $f$ be a CA on $A^\Z$.
A word $s \in A^*$ occurs in $\glimset(f)$ if and only if there exists a word $v \in A^*$ and $i \in \Z$ such that for all $u, w \in A^*$ there exist infinitely many $t \in \N$ with $f^t([u v w]_{i - |u|}) \cap [s] \neq \emptyset$.
\end{lemma}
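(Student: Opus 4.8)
The plan is to work directly from the definition of $\glimset(f)$ as the smallest closed set with comeager realm, and to reduce the stated condition to a statement about a single recurrence set. For a clopen cylinder $U$ write $\mathcal{A}(U) = \bigcap_{N} \bigcup_{t \geq N} f^{-t}(U) = \{ x \in A^\Z \mid f^t(x) \in U \text{ for infinitely many } t \}$, which is $G_\delta$ because each $f^{-t}(U)$ is clopen. Since $[s]_0$ is clopen, $x \in \mathcal{A}([s]_0)$ if and only if $\omega(x) \cap [s]_0 \neq \emptyset$: if the orbit enters $[s]_0$ infinitely often, a convergent subsequence has its limit in the closed set $[s]_0$, and the converse is immediate. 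Using $\sigma$-invariance of $\glimset(f)$ (Lemma~\ref{lem:Invariant}), the assertion $s \in \lang(\glimset(f))$ is equivalent to $\glimset(f) \cap [s]_0 \neq \emptyset$, so the lemma reduces to: $\glimset(f) \cap [s]_0 \neq \emptyset$ holds if and only if there exist $v$ and $i$ such that $\mathcal{A}([s]_0)$ is comeager relative to the cylinder $[v]_i$.

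First I would match the right-hand side of the lemma to $\mathcal{A}([s]_0)$. Since the cylinders $[uvw]_{i-|u|}$ (for $u, w \in A^*$) form a basis of the clopen subspace $[v]_i$, and $f^t([uvw]_{i-|u|}) \cap [s]_0 \neq \emptyset$ is the same as $f^{-t}([s]_0) \cap [uvw]_{i-|u|} \neq \emptyset$, the right-hand condition says precisely that each basic subcylinder of $[v]_i$ meets $f^{-t}([s]_0)$ for infinitely many $t$. The hard part is that this only guarantees, for each $t$, a witness point depending on $t$, whereas comeagerness of $\mathcal{A}([s]_0)$ in $[v]_i$ asks for a single point whose orbit enters $[s]_0$ infinitely often. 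I would bridge this gap with a nested-cylinder compactness argument: starting from any subcylinder $O_0 = [uvw]_{i-|u|}$, repeatedly use the hypothesis to pick a time $t_k$ and a subcylinder $O_{k+1} \subseteq f^{-t_k}([s]_0) \cap O_k$ (possible since $f^{-t_k}([s]_0)$ is clopen), with $t_0 < t_1 < \cdots$. Any point of the nonempty intersection $\bigcap_k O_k$ then lies in $O_0$ and enters $[s]_0$ at every time $t_k$, so it belongs to $\mathcal{A}([s]_0) \cap O_0$; hence $\mathcal{A}([s]_0)$ is dense in $[v]_i$, and being a dense $G_\delta$ it is comeager there. The reverse implication is immediate, as a point of $\mathcal{A}([s]_0) \cap [uvw]_{i-|u|}$ witnesses the required infinitude of times.

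It remains to prove the reduced equivalence, where I would invoke that $\realm(\glimset(f))$ is comeager. For the forward direction, suppose $y \in \glimset(f) \cap [s]_0$. If $\mathcal{A}([s]_0)$ were meager, then $C = A^\Z \setminus [s]_0$ would be a closed set with $\realm(C) = A^\Z \setminus \mathcal{A}([s]_0)$ comeager, forcing $\glimset(f) \subseteq C$ and contradicting $y \in [s]_0$; thus $\mathcal{A}([s]_0)$ is non-meager. A non-meager set with the Baire property, in particular a $G_\delta$ set, is comeager relative to some nonempty clopen set, hence relative to some cylinder $[v]_i$, which is the right-hand side. For the converse, assume $\mathcal{A}([s]_0)$ is comeager in $[v]_i$. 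Since $\realm(\glimset(f))$ is comeager in $A^\Z$, it is comeager in the clopen subspace $[v]_i$ as well, so the two comeager subsets of the Baire space $[v]_i$ intersect; any $x$ in the intersection satisfies $\omega(x) \subseteq \glimset(f)$ and, by the first paragraph, $\omega(x) \cap [s]_0 \neq \emptyset$, producing a point of $\glimset(f) \cap [s]_0$. The only genuinely delicate point is the witness-synchronization in the second paragraph; the rest is bookkeeping with clopen sets and the Baire category theorem.
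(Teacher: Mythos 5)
Your proof is correct. Note that the paper gives no argument of its own for this lemma: it is imported wholesale as a corollary of Remark~4.3 of \cite{DjGu18}, so you are supplying a derivation that the paper omits. Your route --- reducing both sides of the equivalence to the single statement ``the set $\mathcal{A}([s])$ of points visiting $[s]$ infinitely often is comeager in some cylinder'' --- rests on three ingredients: the identity $\realm(A^\Z\setminus[s])=A^\Z\setminus\mathcal{A}([s])$, valid because $[s]$ is clopen (closedness for the forward inclusion via a convergent subsequence, openness for the reverse); the Baire-property localization ``a non-meager $G_\delta$ set is comeager in some nonempty open, hence in some cylinder''; and the fact that $\realm(\glimset(f))$ is itself comeager, which this paper also does not prove but asserts in its definitions section. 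All steps check out, and the two genuinely load-bearing points are exactly the ones you flag: the nested-cylinder compactness argument that upgrades ``every basic subcylinder of $[v]_i$ meets $f^{-t}([s])$ for infinitely many $t$'' to a single point of $\mathcal{A}([s])$ in each subcylinder (here the hypothesis of \emph{infinitely many} $t$ is what lets you choose $t_0<t_1<\cdots$ strictly increasing rather than stalling at a fixed time), and the intersection of the two comeager subsets of the Baire space $[v]_i$ in the converse direction. What your self-contained argument buys is independence from the reference and an explicit view of where comeagerness of the realm is used; what the paper's citation buys is brevity. One cosmetic remark: when you pass from ``non-meager in $A^\Z$'' to ``comeager relative to some $[v]_i$'', it is worth saying explicitly that meagerness in an open subspace agrees with meagerness in the ambient space for subsets of that subspace, so that the relative and absolute notions do not get conflated.
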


We say that the word $v$ \emph{enables} $s$ for $f$.

  

\begin{lemma}
\label{lem:ForcingWords}
Let $f$ be a CA on $A^\Z$, let $n \in \N$, and let $[v]_i \subset A^\Z$ be a cylinder set.
Then there exists a cylinder set $[w]_j \subset [v]_i$ and $T \in \N$ such that for all $t \geq T$ we have $f^t([w]_j) \subset [\lang_n(\glimset(f))]$.
\end{lemma}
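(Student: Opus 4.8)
The plan is to combine a compactness argument with the Baire category theorem. Note first that the window set $[\lang_n(\glimset(f))] = \bigcup_{s \in \lang_n(\glimset(f))} [s]$ is clopen, being a finite union of cylinders, and that $\glimset(f) \subseteq [\lang_n(\glimset(f))]$ since every configuration of the generic limit set has a legal length-$n$ word at coordinate $0$. The goal is to produce a subcylinder of $[v]_i$ on which the central window stabilizes into $\lang_n(\glimset(f))$ after a \emph{uniform} time.

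First I would establish a pointwise version: for every $x \in \realm(\glimset(f))$ there is a threshold $T_x \in \N$ with $f^t(x) \in [\lang_n(\glimset(f))]$ for all $t \geq T_x$. Suppose not; then $f^t(x) \notin [\lang_n(\glimset(f))]$ for infinitely many $t$. By compactness of $A^\Z$ this subsequence has a limit point $y$, which is therefore in $\omega(x)$. Since $[\lang_n(\glimset(f))]$ is clopen its complement is closed, so $y \notin [\lang_n(\glimset(f))]$, and as $\glimset(f) \subseteq [\lang_n(\glimset(f))]$ this gives $y \notin \glimset(f)$, contradicting $\omega(x) \subseteq \glimset(f)$, which holds because $x$ lies in the realm.

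Next I would upgrade this non-uniform statement to a uniform one. For each $T$ set $V_T = \bigcap_{t \geq T} (f^t)^{-1}([\lang_n(\glimset(f))])$; each $V_T$ is closed, being an intersection of clopen sets, the sequence is increasing, and the pointwise step yields $\realm(\glimset(f)) \subseteq \bigcup_{T \in \N} V_T$. I would then localize to the given cylinder: $[v]_i$ is a nonempty clopen set, hence a compact metric (therefore Baire) space, and $\realm(\glimset(f)) \cap [v]_i$ is comeager in $[v]_i$ because intersecting a comeager set with an open set preserves comeagerness. Consequently $\bigcup_T (V_T \cap [v]_i)$ is comeager, hence non-meager, in $[v]_i$.

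Finally, since a countable union of meager sets is meager, some $V_T \cap [v]_i$ must be non-meager in $[v]_i$; being closed and non-meager, it has nonempty interior there, and as cylinders form a basis this interior contains a cylinder $[w]_j \subseteq V_T \cap [v]_i$. For this $[w]_j$ and this $T$, every $x \in [w]_j$ satisfies $f^t(x) \in [\lang_n(\glimset(f))]$ for all $t \geq T$, that is $f^t([w]_j) \subseteq [\lang_n(\glimset(f))]$, as required. I expect the only real obstacle to be exactly this passage from the $x$-dependent thresholds $T_x$ to a single $T$ valid on a whole cylinder: the pointwise bound by itself is useless, and it is the representation of the realm as an increasing union of \emph{closed} sets, combined with the Baire property of $[v]_i$, that forces one of the pieces to be large enough to swallow a cylinder.
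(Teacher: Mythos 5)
Your proof is correct, but it takes a genuinely different route from the paper. The paper derives the lemma from its combinatorial characterization of $\lang(\glimset(f))$ (Lemma~\ref{lem:CombChar}): for each of the finitely many words $u_1,\ldots,u_k \in A^n \setminus \lang_n(\glimset(f))$, the negation of that characterization yields an extension $a_\ell v b_\ell$ of the given cylinder and a time $T_\ell$ after which $[u_\ell]$ is avoided, and concatenating these $k$ extensions produces the forcing word directly. You instead go back to the definition of $\glimset(f)$ via its comeager realm: a compactness argument gives the pointwise eventual containment $f^t(x) \in [\lang_n(\glimset(f))]$ for $x$ in the realm, and then writing the realm as an increasing union of the closed sets $V_T$ and applying the Baire category theorem inside the clopen (hence compact, hence Baire) set $[v]_i$ forces some $V_T \cap [v]_i$ to be non-meager, therefore to contain a cylinder. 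Every step checks out: $[\lang_n(\glimset(f))]$ is indeed clopen and contains $\glimset(f)$, each $V_T$ is closed as a countable intersection of clopen preimages, relative comeagerness of the realm in $[v]_i$ holds because intersecting the dense open sets witnessing comeagerness with the open set $[v]_i$ keeps them dense there, and a closed non-meager set has nonempty interior. What the paper's argument buys is uniformity with the rest of the development (it reuses Lemma~\ref{lem:CombChar}, which is the workhorse of the complexity results) and an explicit description of $w$ as a finite sequence of extensions of $v$; what yours buys is independence from Lemma~\ref{lem:CombChar} --- your argument needs only the definition of the generic limit set and is essentially the standard Baire-category mechanism underlying the combinatorial characterization itself.
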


Words $w$ with the above property are called \emph{$\glimset(f)$-forcing}, since they force the word $f^t(x)_{[0, n)}$ to be valid in $\glimset(f)$ whenever $w$ occurs in $x$ at position $j$.
The result intuitively states that any word can be extended into a $\glimset(f)$-forcing word.

\begin{proof}
Denote $A^n \setminus \lang_n(\glimset(f)) = \{u_1, \ldots, u_k\}$.
Since $u_1$ does not occur in $\glimset(f)$, Lemma~\ref{lem:CombChar} applied to $[v]_i$ implies that there exist words $a_1, b_1 \in A^*$ and $T_1 \in \N$ such that $f^t([a_1 v b_1]_{i-|a_1|}) \cap [u_1] = \emptyset$ for all $t \geq T_1$.
For $u_2$ we find words $a_2, b_2 \in A^*$ and $T_2 \in \N$ such that $f^t([a_2 a_1 v b_1 b_2]_{i-|a_2 a_1|}) \cap [u_2] = \emptyset$ for all $t \geq T_2$.
Continuing like this, we obtain a word $w = a_k \cdots a_1 v b_1 \cdots b_k$, a position $j = i - |a_k \cdots a_1|$ and a number $T = \max(T_1, \ldots, T_k)$ that have the desired property.
\end{proof}

\begin{example}[Example~5.12 in~\cite{DjGu19}]
\label{ex:GenNilp}
Consider the minimum CA $f : \{0,1\}^\Z \to \{0,1\}^\Z$ defined by $f(x)_i = \min(x_i, x_{i+1})$.
We claim that $\glimset(f) = \{{}^\infty 0^\infty\}$.
Proving this directly from the definition is not difficult, but let us illustrate the use of Lemma~\ref{lem:CombChar}.
First, every word $0^n$ for $n \in \N$ is enabled by itself: for all $u, v \in \{0,1\}^*$ and $t \in \N$ we have $f^t([u 0^n v]_{-|u|}) \subset [0^n]$.
On the other hand, suppose $s \in \lang(\glimset(f))$, so that some cylinder set $[w]_j$ enables $s$.
Choose $u = v = 0^{|j|+1}$.
Then every $x \in [u w v]_{j-|u|}$ satisfies $x_{-|j|-1} = 0$, so that $f^{|j| + 1 + |s|}(x)_{[0, |s|)} = 0^{|s|}$.
Since a cell can never change its state from $0$ to $1$, we have $s = 0^{|s|}$.
Hence the language of $\glimset(f)$ is $0^*$, and the claim is proved.
\end{example}


\section{Complexity of generic limit sets}

From the combinatorial characterization we can determine the maximal computational complexity of the language of the generic limit set.

\begin{theorem}
\label{thm:MaxComplexity}
The language of the generic limit set of any CA is $\Sigma^0_3$.
For any $\Sigma^0_3$ set $P$, there exists a cellular automaton $f$ such that $P$ is many-one reducible to $\lang(\glimset(f))$.
\end{theorem}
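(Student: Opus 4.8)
The plan for the first claim is to read off the complexity directly from the combinatorial characterization. By Lemma~\ref{lem:CombChar}, a word $s$ lies in $\lang(\glimset(f))$ if and only if there exist $v$ and $i$ such that for all $u,w$ there are infinitely many $t$ with $f^t([uvw]_{i-|u|}) \cap [s] \neq \emptyset$. I would first observe that the innermost predicate $f^t([uvw]_{i-|u|}) \cap [s] \neq \emptyset$ is decidable uniformly in all of its arguments: a radius-$r$ local rule gives $f^t$ radius $rt$, so the coordinates $[0,|s|)$ of the image depend only on a finite window of the input cylinder, and one merely checks the finitely many completions of that window. Rewriting ``infinitely many $t$'' as $\forall N\,\exists t\ge N$ turns the condition into a block of existential quantifiers over $(v,i)$, a block of universal quantifiers over $(u,w,N)$, and an existential quantifier over $t$, applied to a decidable matrix once all the finitely supported data are encoded as naturals. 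This is a $\Sigma^0_3$ definition of $\lang(\glimset(f))$, which gives the bound.

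For the completeness claim, I would fix a $\Sigma^0_3$ set $P$, written in normal form as $n\in P \iff \exists a\,\forall b\,\exists c\, R(n,a,b,c)$ with $R$ computable, and construct a CA $f=f_P$ together with a computable map $n\mapsto s_n$ satisfying $s_n\in\lang(\glimset(f)) \iff n\in P$. The design principle is to align the three quantifiers of $P$ with the three of Lemma~\ref{lem:CombChar}: the outer $\exists a$ with the choice of an enabling word $v$, the middle $\forall b$ with the universal quantifier over the finite contexts $(u,w)$, and the inner $\exists c$ with an unbounded search realized through ``infinitely many $t$''. Concretely, $f$ simulates Turing computations on a clean region created by a recognizable seed word. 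A genuine seed encodes a pair $(n,a)$ and emits high-priority cleaning signals outward that overwrite their surroundings with a blank background and then run a universal machine on the resulting clean tape; this machine decodes a challenge $b$ from the adjacent context (say from $u$), treating unparseable data as a harmless default, and searches for a witness $c$ with $R(n,a,b,c)$, repeatedly reprinting the distinguished word $s_n$ at the origin upon success and never printing it on failure.

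The key point is that we may \emph{choose} the continuation in the characterization: taking everything beyond the finite $(u,w)$ to be the blank background, the cleaning signals meet only blank cells and the simulation proceeds undisturbed. With $s_n$ emitted solely by a genuine success action and $s_n$ built from a reserved marker symbol, a seed for $(n,a)$ then enables $s_n$ exactly when every challenge $b$ admits a witness (garbage contexts being automatically fine), i.e.\ when $\forall b\,\exists c\,R(n,a,b,c)$; and no non-seed word can enable $s_n$, since against the blank context nothing prints it. Taking the union over seeds realizes the outer $\exists a$, so $s_n\in\lang(\glimset(f)) \iff \exists a\,\forall b\,\exists c\,R(n,a,b,c) \iff n\in P$, as required for the reduction.

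The hard part will be the robustness demanded by the universal quantifier over the finite contexts $(u,w)$: an adversarial $u$ or $w$ may embed counterfeit seeds or inject signals that race toward the origin and threaten to corrupt the nascent computation. I would resolve this with a strict priority discipline in the local rule, under which genuine cleaning signals dominate and erase any counterfeit structure and colliding signals resolve so that the genuine computation always reclaims a clean neighborhood of the origin. Since $(u,w)$ is finite and the continuation is chosen blank, only finitely many cells are ever disturbed, so the simulation stabilizes after a bounded transient — which suffices, because only the behavior at infinitely many large $t$ matters. The remaining care, namely preventing any spurious occurrence of $s_n$ in the background, in transients, or during incomplete searches, I would guarantee by reserving a dedicated marker symbol used only by the success action.
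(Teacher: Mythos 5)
Your upper-bound argument is correct and is exactly the paper's (the paper simply asserts that the condition of Lemma~\ref{lem:CombChar} is $\Sigma^0_3$; your unpacking of the decidable matrix and the quantifier count is the intended justification). The hardness half, however, has a genuine gap in its quantifier alignment. You match the middle quantifier $\forall b$ of $P$ with the universal quantifier over contexts $(u,w)$ in Lemma~\ref{lem:CombChar} and have the simulated machine ``decode a challenge $b$ from the adjacent context.'' This cannot be made sound. First, the enabling word $v$ is chosen existentially, so nothing prevents choosing a $v$ that contains both the seed and a hard-coded favorable challenge $b_1$ sitting exactly where the machine looks for its input; every context then decodes to $b_1$, and your equivalence degenerates to $\exists a\,\exists b\,\exists c\,R$, a $\Sigma^0_1$ condition rather than $\Sigma^0_3$. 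Second, the matrix $f^t([uvw])\cap[s]\neq\emptyset$ hides an existential quantifier over the infinite completion of $uvw$: to prove ``$s_n$ enabled $\Rightarrow \forall b\,\exists c\,R$'' you must exhibit, for a bad challenge $b_0$, one finite context $(u,w)$ that defeats \emph{every} completion, but a completion may place a genuine-looking seed far beyond $(u,w)$, positioned so that its computation reaches the origin, reads a challenge of the completion's own choosing from its own vicinity, and prints $s_n$ infinitely often. No finite context can block such far-away influence without also killing the legitimate computation, and your priority discipline only protects the genuine computation from counterfeits, which is the opposite direction of the one at issue.

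The paper's construction resolves exactly this difficulty by a different decomposition. It first rewrites the inner $\Pi^0_2$ part $\forall m'\,\exists k\,\psi$ in an ``infinitely often'' normal form: a bounded $n$-step walk through the pairs $(m',k)$ ends in an accepting position for infinitely many $n$ if and only if $\forall m'\,\exists k\,\psi$ holds, and the resulting predicate $M(w,m,n)$ is decidable. The parameter $n$ is then tied to a geometric quantity, namely the distance between the $S_1$- and $S_2$-signals whose collision creates the computation (equivalently, the length of the simulated work tape), and the context $u=(E,0,\#)^k$ forces any computation that survives to produce $\hat w$ at a late time to arise from a collision at time $t'>k+|w'|$, hence to use $n>k+|w'|-5$. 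In other words, the universal quantifier over contexts is spent on forcing $n\to\infty$ (matching the $\forall N$ hidden inside ``infinitely many $t$''), not on transmitting data to the machine, while each successful $n$ supplies one of the infinitely many required times. Without this normal-form step, or some equivalent mechanism converting the middle universal quantifier into a geometrically enforceable one, your reduction does not go through.
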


\begin{proof}
The condition given by Lemma~\ref{lem:CombChar} is $\Sigma^0_3$.

For the second claim, since $P$ is a $\Sigma^0_3$ set, there is a computable predicate $\psi$ such that $P = \{ w \in A^* \;|\; \exists m \, \forall m' \, \exists k \, \psi(w, m, m', k) \}$.
Let $M$ be a Turing machine with state set $Q$, initial state $q_0 \in Q$, two final states $q_f^1, q_f^2 \in Q$, a read-only tape with alphabet $\Gamma_A = A \cup \{\#,\$\}$ and a read-write tape with some tape alphabet $\Gamma$ with special symbol $1 \in \Gamma$.
Both tapes are infinite to the right, and $M$ has only one head that sees the same position of both tapes.
When initialized in state $q_0$, the machine checks that the read-only tape begins with $\# w \# \$^m \#$ for some $w \in A^*$ and $m \geq 0$, and the read-write tape begins with $1^{3 n + 5} {\bot}$ for some $n \geq 0$, halting in state $q_f^1$ if this is not the case.
Then it enumerates $n$ pairs $(m', k) \in \N^2$, starting from $(0,0)$ and moving from $(m', k)$ to $(m'+1,0)$ if $\psi(w, m, m', k)$ holds, and to $(m', k+1)$ otherwise.
If the process ends with $k > 0$, then $M$ halts in state $q^1_f$.
Otherwise it writes $0$s to the $|w|+2$ leftmost cells of the read-write tape, goes to the leftmost cell and halts in state $q^2_f$.
Then $w \in P$ if and only if for some $m \in \N$, the machine halts in state $q_f^2$ for infinitely many choices of $n$; denote this condition by $M(w,m,n)$.

Denote $\Sigma_M = (Q \cup \{{\leftarrow},{\rightarrow}\}) \times \Gamma$ and $\Sigma_0 = \{ B, E, S_1, S_2, S_2', S_3, {\vdash} \}$.
We construct a radius-$3$ CA $f$ on the alphabet $\Sigma = (\Sigma_M \cup \Sigma_0) \times \Gamma_A$ to whose generic limit set $P$ reduces.
We write elements of $\Sigma_M \times \Gamma_A$ as triples $(q, g, a) \in (Q \cup \{{\leftarrow},{\rightarrow}\}) \times \Gamma \times \Gamma_A$.
The first track of $f$ contains elements of $\Sigma_M$, which are used to simulate computations of $M$, and $\Sigma_0$, which perform a geometric process that initialized such simulations.
The element $B$ forms a \emph{background} on which the \emph{signals} $E$, $S_1$, $S_2$, $S_2'$ and $S_3$ travel.
The last track of $\Sigma$ is never modified by $f$, and it serves as the read-only tape of $M$ in the simulation.
We think of $f$ as a non-uniform CA over $\Sigma_M \cup \Sigma_0$ whose local function at each coordinate $i \in \Z$ depends on the element $s \in \Gamma_A$ at $i$.
The automaton $f$ is defined by the following constraints:
\begin{itemize}
\item
  The signal $E$ always travels to the right at speed $2$.
  For $k = 1, 2, 3$, as long as the signal $S_k$ or $S_k'$ has $B$s to its right, it travels to the right at speed $k$.
  The signals $E$, $S_1$ and $S_2$ produce $B$s in their wake, while $S_3$ produces $({\leftarrow},1)$-states and $S_2'$ produces $({\leftarrow},{\bot})$-states.
\item
  When the signals $S_2$ and $S_1$ collide, they produce the four-cell pattern ${\vdash} (q_0, 1) ({\leftarrow}, 1) S_3$, where $S_3$ lies at the point of their collision.
  When $S_3$ and $S_2$ collide, they are replaced by an $S_2'$.
\item
  In an interval of the form
  \[
    {\vdash} ({\rightarrow}, g_0) \ldots ({\rightarrow}, g_{m-1}) (q, g_m) ({\leftarrow}, g_{m+1}) \ldots ({\leftarrow}, g_{m+n})
  \]
  that is either unbounded or terminated on its right by $S_3$ or $S_2'$, $f$ simulates a computation of $M$ using $q$ as the head, the $\Gamma$-track as the read-write tape and the $\Gamma_A$-track as the read-only tape.
  If $q = q^i_f$ is a final state, it is replaced by $E$ instead.
\item
  Any pattern not mentioned above produces $E$-states.
\end{itemize}
In particular, the signals $S_1$ and $S_2$ are never created, so they always originate from the initial configuration.
The signal $S_3$ and all Turing machine heads originate either from the initial configuration or a collision of $S_2$ and $S_1$, and $S_2'$ originates from the initial configuration or a collision of $S_3$ and $S_2$.
An $E$-signal, once created, cannot be destroyed.

For a word $w \in A^+$, define $\hat w = (q^2_f {\leftarrow}^{|w|+1}, 0^{|w|+2}, \# w \#)$.
We claim that $w \in P$ if and only if $\hat w \in \lang(\glimset(f))$.
The proof is visualized in Figure~\ref{fig:MaxComplexity}.

\begin{figure}[ht]
\begin{center}
\begin{tikzpicture}

\draw (-4.5,0) -- (5,0);
\foreach \x in {-0.7,0,2,3}{
  \draw (\x,-0.1) -- ++(0,0.2);
}
\foreach \x in {-1.2,-1.4,-1.6,-3.2}{
  \fill (\x,0) circle (0.05cm);
}

\draw (-1.6,0) -- (0,3.2) -- (-3.2,0);
\draw (-1.2,0) -- ++(6.6,6.6);
\draw (-1.4,0) -- ++(5,5) -- (0,3.2) -- (0,6.6);
\draw (3.6,5) -- ++(1.6,1.6);
\draw [dashed] (3.6,5) -- ++(0,1.6);
\draw [dashed] (0,0) -- (0,3.2);

\draw [densely dotted] (0,3.2)
\foreach \dx in {1,1,0,1,1,0,-1,1,1,1,0,1,0,1,0,-1,1}{
  -- ++(\dx*0.1,0.2)
}
;

\draw [fill=white] (0,3.2) circle (0.07cm);

\node [below right] at (-1.2,0) {$E$};
\node [below] at (-1.4,0) {$S_2$};
\node [below left] at (-1.6,0) {$S_1$};
\node [below left] at (-3.2,0) {$S_2$};
\node [above left] at (1.8,4) {$S_3$};

\node [above] at (-0.35,0) {$u$};
\node [above] at (1,0) {$\tilde w$};
\node [above] at (2.5,0) {$v$};

\node at (0.6,2.7) {$B$};
\node at (-2,0.4) {$B$};
\node at (2,5.5) {$({\leftarrow},1)$};
\node at (4.25,6.3) {$({\leftarrow},{\bot})$};

\node [above] at (0,6.6) {$\vdash$};
\node [above] at (0.8,6.6) {$q$};
\node [above] at (5.2,6.6) {$S_2'$};
\node [above right] at (5.4,6.6) {$E$};

\end{tikzpicture}
\end{center}
\caption{Proof of Theorem~\ref{thm:MaxComplexity}, not drawn to scale. Time increases upward.}
\label{fig:MaxComplexity}
\end{figure}
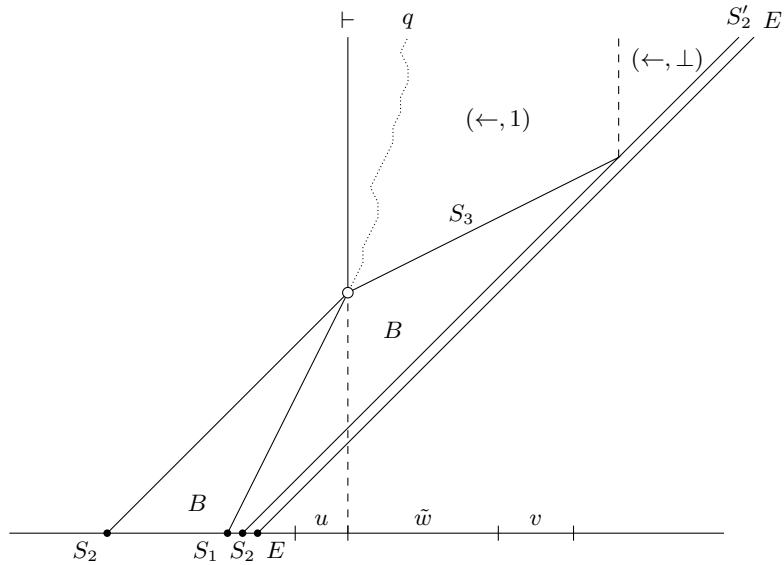

Suppose first that $w \in P$, so that there exists $m \in \N$ such that $M(w,m,n)$ holds for infinitely many $n$.
We claim that the word $\tilde w = (E^{|w|+m+3}, \# w \# \$^m \#)$ enables $\hat w$, so let $u, v \in \Sigma^*$ be arbitrary.
We construct a configuration $x \in [u \tilde w v]_{-|u|}$, which corresponds to the horizontal line of Figure~\ref{fig:MaxComplexity}, as follows.
To the right of $v$ we put $(E, \#)^\infty$, and to the left of $u$ we put only $\#$-symbols on the second track.
Let $n > |u| + 4$ be such that $M(w, m, n)$ holds.
On the first track of $x_{[-n+2,-n+4]}$, put $S_1 S_2 E$; on $x_{-2n+2}$, put $S_2$; on all remaining cells put $B$.
The $E$-signals will destroy everything in their path and replace them with $B$-cells, so we can ignore the contents of the first track of $x_{[-|u|, \infty)}$.
The $S_1$-signal and the leftmost $S_2$-signal will collide at coordinate $2$ of $f^n(x)$ (the white circle in Figure~\ref{fig:MaxComplexity}), resulting in the pattern ${\vdash} (q_0,1) ({\leftarrow}, 1) S_3$ at coordinate $-1$ and $B$s to its left.
The simulated computation of $M$ begins at this time step.
The resulting $S_3$-signal collides with the rightmost $S_2$-signal at coordinate $3 n + 5$ of $f^{2 n + 1}(x)$, producing $({\leftarrow},1)$-states until that point and transforming into a $S_2'$ that produces $({\leftarrow},{\bot})$-states.
This means $M$ has $1^{3n+5} {\bot}$ on its the read-write tape at the beginning of the computation, and $\# w \# \$^m \#$ on the read-only tape.
Hence it eventually writes $0^{|w|+2}$ to the tape and halts in state $q^2_f$ at some time step $t \in \N$.
Then $f^t(x) \in [\hat w]$, and we have showed that $\tilde w$ enables $\hat w$, so $\hat w \in \lang(\glimset(f))$ by Lemma~\ref{lem:CombChar}.

Suppose then $\hat w \in \lang(\glimset(f))$, so that $\hat w$ is enabled by some word $w' \in \Sigma^*$ at coordinate $i \in \Z$.
We may assume $i \leq 0$ and $|w'| \geq i + |w| + 2$ by extending $w'$ if needed.
Let $k \geq 0$ and choose $u = (E, \#)^k$ and $v = (E, \#)$.
In a configuration $x$ that contains $u w' v$, any Turing machine head to the right of $u$ is eventually erased by the $E$-symbols.
Those within $w'$ are erased after $|w'|$ steps, and those to the right of $w'$ are erased before they reach the origin.
Thus, if $f^t(x) \in [\hat w]$ for some $t > |w'|$, then the $q^2_f$ in this configuration is the head of a Turing machine produced at the origin by a collision of some $S_2$-signal and $S_1$-signal at some earlier time $t' < t$ (again the white circle in Figure~\ref{fig:MaxComplexity}).
After a finite computation, $M$ can halt in state $q^2_f$ only at the left end of the tape, so the collision happens at coordinate $2$ and $t' > |w'|$.
Since the signals $S_2$ and $S_1$ cannot be created, they originate at coordinates $-2 t' - 2$ and $- t' - 2$ of $x$.
Since the Turing machine eventually halts in state $q^2_f$, after being initialized it will read $1^{3n+5} {\bot}$ on its read-write tape and $\# w \# \$^m \#$ on the read-only tape for some $m, n \in \N$ with $M(w, m, n)$.
Since the read-only tape cannot be modified by $f$, $w'$ already contains the word $\# w \#$ on its second track, and since $v$ has $\#$ on its second track, $w'$ must contain $\# w \# \$^m \#^p$ for some $p \geq 0$.
Hence $m$ is independent of $k$.

The signal $S_3$ produced at the same collision as $q_0$ continues to the right at speed $3$, producing $({\leftarrow},1)$-states until it is destroyed.
To its right we have $B$-states produced by the initial $E$-signals in $u$, followed by those $E$-signals.
Since the Turing machine reads $1^{3n+5} {\bot}$ on its tape, the $S_3$-signal is destroyed after $n + 1$ steps, at time $t' + n + 1$, either by encountering an invalid pattern or by collision with an $E$-signal or $S_2$-signal.
In the first two cases, after the removal of $S_3$ the segment of $({\leftarrow},1)$-states produced by it is now bordered by an $E$-state, which is an invalid pattern and results in new $E$-states by the last rule of $f$.
These $E$-states will eventually destroy the entire computation segment before the Turing machine can halt.
Hence $S_3$ must collide with an $S_2$-signal at coordinate $3 n + 5$ at time $t' + n + 1$.
This signal originates at position $-2 t' + n + 3$ in $x$, which must be to the right of the $S_1$-signal at coordinate $-t' - 2$ that produces $S_3$, since these signals do not collide.
Hence $n > t' - 5 > k + |w'| - 5$, so $n$ grows arbitrarily large with $k$.
We have shown $w \in P$.
\end{proof}

If we know more about the structure of $\glimset(f)$ and the dynamics of $f$ on it, we can improve the computability bound.

\begin{proposition}
\label{prop:Minimal}
Let $f$ be a CA.
If $\glimset(f)$ is a minimal subshift, then its language is $\Sigma^0_2$.
\end{proposition}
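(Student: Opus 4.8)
The plan is to combine the combinatorial characterization of Lemma~\ref{lem:CombChar} with the uniform recurrence that minimality imposes on $\glimset(f)$. Write $G = \glimset(f)$ and recall $G$ is a nonempty subshift (Lemma~\ref{lem:Invariant}). Since $G$ is minimal it is uniformly recurrent: for each $n$ there is $\rho(n) \in \N$ such that every word of $\lang_{\rho(n)}(G)$ contains every word of $\lang_n(G)$ as a factor (take $\rho(n)$ to be the maximum recurrence gap over the finite set $\lang_n(G)$). This gives a dichotomy for a fixed $s$ with $|s| = n$: either $s \notin \lang(G)$, in which case no word of $\lang(G)$ contains $s$, or $s \in \lang(G)$, in which case every word of $\lang_{\rho(n)}(G)$ contains $s$. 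Writing $\sqsubseteq$ for the factor relation, this yields
\[ s \in \lang(G) \iff \exists R \; \forall y \in A^R \; \big( s \not\sqsubseteq y \ \to\ y \notin \lang(G) \big), \]
where the forward direction uses $R = \rho(n)$ together with $\lang_R(G) \neq \emptyset$, and the backward direction picks any $y \in \lang_R(G)$ containing $s$ and uses that $\lang(G)$ is factor closed.

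The purpose of this reformulation is that it expresses \emph{membership} in $\lang(G)$ using only \emph{non}-membership of the finitely many words $y \in A^R$. So it suffices to show that the predicate ``$y \notin \lang(G)$'' is $\Sigma^0_2$, equivalently that ``$y \in \lang(G)$'' is $\Pi^0_2$. Granting this, the displayed right-hand side is, after the guarding quantifier $\exists R$, a finite conjunction of $\Sigma^0_2$ conditions, hence $\Sigma^0_2$, and the proposition follows (in fact this would give $\Delta^0_2$, and I would then state only the $\Sigma^0_2$ bound).

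To get a $\Pi^0_2$ description of membership I would show that a \emph{single} cylinder enables the whole language. Concretely, the claim is that there is one word $v^*$ and one position $i^*$ such that $(v^*, i^*)$ enables every $y \in \lang(G)$ in the sense of Lemma~\ref{lem:CombChar}; since enabling implies occurrence in $G$, this makes ``$y \in \lang(G)$'' equivalent to the fixed condition $\forall u, w \; \exists^\infty t \; f^t([u v^* w]_{i^* - |u|}) \cap [y] \neq \emptyset$, which is $\Pi^0_2$. To produce such a cylinder I would argue, from the Baire-category definition of the generic limit set, that minimality makes $G$ \emph{generically realized}: there is a cylinder $[v^*]_{i^*}$ in which $\omega(x) = G$ holds for a comeager set of $x$. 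For such $x$ and any $y \in \lang(G)$, choosing $z \in G$ with $z_{[0,|y|)} = y$, the orbit of $x$ returns to $[y]$ infinitely often because $z \in \omega(x)$; as this holds for a comeager, hence dense, set of $x$ in every sub-cylinder $[u v^* w]_{i^*-|u|}$, the pair $(v^*, i^*)$ enables $y$. Uniform recurrence is what rules out the general obstruction, where distinct words may be observable at the origin only from mutually incompatible cylinders.

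The main obstacle is precisely this last step: upgrading the local enabling words of Lemma~\ref{lem:CombChar} to one uniform enabling cylinder. The difficulty is that enabling fixes the observation window at the origin while letting the enabling word slide, so different words of $\lang(G)$ are a priori enabled at different positions, and the position shifts obtained from uniform recurrence (enabling a long word of $\lang_{\rho(n)}(G)$ and reading off its factors) do not by themselves admit a common refinement. The substance of the argument is therefore to use minimality of $G$ together with its minimality as a generic limit set to force the generic $\omega$-limit inside a suitable cylinder to be all of $G$, so that a single observation origin suffices; I expect this to require the category machinery of \cite{DjGu18} rather than Lemma~\ref{lem:CombChar} alone. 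Once the uniform enabler is established, the quantifier count of the second paragraph delivers the $\Sigma^0_2$ bound.
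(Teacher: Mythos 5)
Your first reduction is sound, and it is exactly the folklore fact the paper itself invokes just before Corollary~\ref{cor:SturmianUpperBound}: for a nonempty minimal subshift, $s \in \lang(G)$ is equivalent to $\exists R\,\forall y \in A^R\,(s \not\sqsubseteq y \to y \notin \lang(G))$, so a $\Pi^0_2$ description of membership would yield a $\Sigma^0_2$ (indeed $\Delta^0_2$) one. The genuine gap is the second half: the claim that minimality forces the language of $\glimset(f)$ to be $\Pi^0_2$ via a single uniform enabling cylinder is never proved, and you correctly flag it as the main obstacle without overcoming it. Writing $D_y = \{x : f^t(x) \in [y] \text{ for infinitely many } t\}$, your set $\{x : \omega(x) \supset G\}$ is $\bigcap_{y,i} \sigma^{-i}(D_y)$; each $D_y$ is comeager in \emph{its own} enabling cylinder, but a countable intersection of sets each comeager in different cylinders can be meager. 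Minimality does give, for each $y, y'$, that some bounded shift of $D_{y'}$ is non-meager inside an enabling cylinder of $y$, but the resulting sub-cylinders depend on $y'$ and need not admit a common refinement over infinitely many $y'$. Worse, the statement you need is strictly stronger than the proposition (it would give $\Delta^0_2$ rather than $\Sigma^0_2$), and the paper explicitly leaves open whether its $\Sigma^0_2$ bound is ever strict for minimal generic limit sets, i.e., whether your intermediate claim is even true.

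The paper's proof sidesteps the position-sliding problem rather than solving it. It takes $n$ such that $w$ occurs in every word of $\lang_n(X)$ and applies Lemma~\ref{lem:ForcingWords} to obtain a cylinder $[v]_j$ with $f^t([v]_j) \subset [\lang_n(X)]$ for all $t \geq T$; then $w$ occurs \emph{somewhere} in the window $[0,n)$ of every $f^t(x)$ with $x \in [v]_j$, and the final $\Sigma^0_2$ formula existentially quantifies over $v, j, n, T$ and checks occurrence of $w$ anywhere in that window (the converse uses that when $w \notin \lang(X)$, every cylinder refines to one whose images eventually avoid all such occurrences). Because the formula never pins down the position of $w$ and is allowed an outer existential quantifier over the witnessing cylinder, no uniform enabler is needed. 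To repair your argument you would either have to prove the uniform-enabler claim (which would be a new and possibly false result) or let the $\exists R$ in your reformulation absorb a forcing cylinder and a sliding occurrence position in the same way, at which point you have essentially reconstructed the paper's proof.
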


\begin{proof}
Denote $X = \glimset(f)$ and let $w \in \lang(X)$.
Since $X$ is minimal, there exists $n \in \N$ such that $w$ occurs in each word of $\lang_n(X)$.
Let $[v]_j$ be an $X$-forcing cylinder set that satisfies $f^t([v]_j) \subset [\lang_n(X)]$ for all large enough $t$, as given by Lemma~\ref{lem:ForcingWords}.
For these $t$, the set $f^t([v]_j)$ intersects $[w]_i$ for some $0 \leq i \leq n - |w|$.
On the other hand, if $w \notin \lang(X)$ then such a word $v$ does not exist, since each word can be extended into one that eventually forbids $w$.
This means that $w \in \lang(X)$ is equivalent to the $\Sigma^0_2$ condition that there exist $v \in A^*$, $j \in \Z$, $n \in \N$ and $T \in \N$ such that for all $t \geq T$ we have $f^t([v]_j) \cap \bigcup_{i=0}^{n-|w|} [w]_i \neq \emptyset$.
\end{proof}


\begin{proposition}
\label{prop:EquicontSigma01}
Let $f$ be a CA and suppose that its restriction to $\glimset(f)$ is equicontinuous.
Then $\glimset(f)$ has a $\Sigma^0_1$ language.
\end{proposition}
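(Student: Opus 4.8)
The plan is to exploit equicontinuity to bound the ``domain of dependence'' of the central window uniformly in time, and thereby eliminate the universal time quantifier that forced the $\Sigma^0_2$ bound in Proposition~\ref{prop:Minimal}. Write $X = \glimset(f)$, a nonempty $f$-invariant subshift by Lemma~\ref{lem:Invariant}, and fix a window size $n$. Equicontinuity of $f|_X$ furnishes a radius $m = m(n)$ such that any two points of $X$ agreeing on $[-m,m]$ have equal images $f^t(x)_{[-n,n]}$ for \emph{all} $t$. The first step is to convert this into combinatorial \emph{semi-blocking words}: words in $\lang(X)$ that, when present in a configuration, prevent information from crossing them, so that the central $n$-window is pinned by a fixed finite region regardless of how much time elapses.

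First I would establish that such words exist and can be read off from $X$ itself: from the uniform modulus $m(n)$ one extracts, for each side, a word $b \in \lang(X)$ whose occurrence fixes the value of a bordering column of $f^t(\cdot)$ for every $t$, which is the equicontinuous analogue of the blocking words used for almost equicontinuous cellular automata, and it is here that the hypothesis is spent. Next I would splice these blocking words together with the $\glimset(f)$-forcing words supplied by Lemma~\ref{lem:ForcingWords}: given a target word $s$, a forcing word $v$ guarantees that the central window lies in $\lang_n(X)$ for all large $t$, while the flanking semi-blocking words guarantee that this central window is determined by a \emph{bounded} initial region uniformly in $t$. Consequently, whether $s$ appears is decided by the forward evolution of a single finite pattern, so the condition ``$f^t$ produces $s$ for infinitely many $t$'' collapses to the single-witness condition ``$f^t$ produces $s$ for some $t$, inside a region certified by blocking words''. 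Searching over forcing words, flanking blockers, and times $t$ is a single existential quantifier over a decidable predicate, placing $\lang(X)$ in $\Sigma^0_1$, while the forcing construction of Lemma~\ref{lem:ForcingWords} shows no such certified witness exists when $s \notin \lang(X)$.

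The main obstacle is the mismatch between the domain of equicontinuity and the domain of the combinatorial characterization: Lemma~\ref{lem:CombChar} and Lemma~\ref{lem:ForcingWords} quantify over configurations of the \emph{full} shift $A^\Z$, whereas equicontinuity of $f|_X$ constrains only pairs of points already lying in $X$. The crux is therefore to show that semi-blocking words valid in $X$ continue to block in full-shift configurations whose central windows have been forced into $\lang(X)$, and that this blocking is robust enough to replace the ``for all large $t$'' clause by a single time. I expect this transfer step, together with the verification that a blocking-certified occurrence is both necessary and sufficient for membership, to be the technically delicate part.

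For the final claim, suppose $f$ is the identity on $X$, so that $X$ is contained in the shift of finite type $\{x : f(x)=x\}$ of fixed points; in particular no information moves on $X$, and every sufficiently wide $X$-valid window that has stabilized to a fixed local pattern persists unchanged under further iteration. To prove mixing, fix $u, v \in \lang(X)$ and, using Lemma~\ref{lem:CombChar} and Lemma~\ref{lem:ForcingWords}, choose forcing extensions of their enablers. Placing the enabler of $u$ near the origin and the enabler of $v$ at an arbitrary distance $\ell$ to the right in a single configuration, forcing makes both the left and the right central windows valid in $X$ for all large times, while the identity on $X$ upgrades the ``infinitely many $t$'' occurrences of $u$ and of $v$ into persistent, and hence \emph{simultaneous}, occurrences. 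Reading the configuration at one such common time yields a word $u w v \in \lang(X)$ whose middle block $w$ has length growing with $\ell$; letting $\ell$ range over all large values realizes every sufficiently long connecting word, giving mixing. The delicate point here is synchronization: one must argue that the two far-apart regions genuinely stabilize to fixed local patterns at a common time rather than merely becoming $X$-valid intermittently, which is exactly what the fixed-point structure provided by $f = \ID$ on $X$ supplies.
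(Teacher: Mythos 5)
Your plan has the right outline for the mixing claim, but for the $\Sigma^0_1$ bound it stalls exactly at the point you yourself flag as ``the technically delicate part'', and that point is the whole proof. A uniform equicontinuity modulus $m(n)$ only controls pairs of points \emph{in} $X$, while Lemmas~\ref{lem:CombChar} and~\ref{lem:ForcingWords} quantify over arbitrary configurations of $A^\Z$ whose orbits never enter $X$; a word $b \in \lang(X)$ that blocks information among points of $X$ gives no control whatsoever over $f^t(x)$ for $x \in [b] \setminus X$, and forcing a window of $f^t(x)$ into $\lang(X)$ does not make the orbit shadow an actual orbit in $X$ (the window at time $t+1$ depends on cells outside the certified region, and the $X$-points realizing successive windows need not lie on a common orbit). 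So ``semi-blocking words valid in $X$ continue to block in forced full-shift configurations'' is not something you can establish from equicontinuity alone, and no argument for it is offered. The paper closes this gap by a different mechanism: equicontinuity of $f|_X$ implies \emph{eventual periodicity}, $f^{k+p}|_X = f^k|_X$ (K\r{u}rka), and one extends a $\glimset(f)$-forcing word until a fixed column $f^t(x)_{[r,2r)}$ is literally \emph{determined} (the same for every $x$ in the cylinder) for the $k+p$ time steps $t \in [T, T+k+p)$; eventual periodicity then makes this column a fixed $p$-periodic sequence for all $t \geq T+k$. Determination of the column, not membership of a word in $\lang(X)$, is what blocks information flow, and it also yields the explicit bound $p|A|^{4r+2n}$ on the pre-period of the central evolution, without which ``infinitely many $t$'' does not collapse to a bounded, decidable search. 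Both of these ingredients are absent from your proposal.

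For the mixing part your argument is essentially the paper's, with one missing step: having $u$ and $v$ persist simultaneously in two far-apart forced windows gives $u, v \in \lang(X)$ at a common time but does not certify that the \emph{connecting} word lies in $\lang(X)$. You must additionally extend the intersection $[w_1]_{j_1} \cap [w_2]_{j_2+N}$ (nonempty for all large $N$) into a single $X$-forcing cylinder whose forced window spans the entire interval from $u$ to $v$; then one late time step yields $u A^{N-|v|} v \cap \lang(X) \neq \emptyset$. Your justification of persistence (once the forced window equals $v_j$ and stays in $\lang(X)$, the local rule acts on it as on a point of $X$, hence as the identity) is correct and matches the paper.
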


\begin{proof}
Denote $X = \glimset(f)$.
An equicontinuous CA on any subshift is eventually periodic (this was shown in~\cite{Ku97} for the full shift, and the general case is not much more difficult), so that there exist $k \geq 0, p \geq 1$ with $f^{k+p}|_X = f^k|_X$.
Let $r \in \N$ be a common radius of $f$ and $f^p$, and let $[w]_j \subset A^\Z$ and $T \in \N$ be given by Lemma~\ref{lem:ForcingWords}, so that $f^t([w]_j) \subset [\lang_{3 r}(X)]$ for all $t \geq T$.
By extending $w$ if necessary, we may assume $j \leq 0$, $|w| = 3 r + 2 h$ and $f^t(x)_{[r, 2r)} = f^t(x')_{[r, 2r)}$ for all $x, x' \in [w]_j$ and $t \in [T, T+k+p)$, where $h = |j|$.
Since $f$ has radius $r$ and is eventually periodic on $X$, we then have $v_t := f^t(x)_{[r, 2r)} = f^t(x')_{[r, 2r)}$ for all $t \geq T$, and the sequence of words $(v_t)_{t \geq T+k}$ is $p$-periodic.



Let $n \geq 0$ and $u \in A^{2 n}$. 
For $t \geq T$ we have $f^t([w u w]_{-|w|-n}) \subset [v_t]_{-2r-h-n} \cap [v_t]_{n+h+r}$, so that no information can be transmitted over the $v_t$-words.
For $x \in [w u w]_{-|w|-n}$ the sequence of words $s = (f^t(x)_{[-2r-h-n, n+h+2r)})_{t \geq T}$ only depends on its values at $t \in [T, T+k+p)$, and eventually contains only words of $\lang(X)$ since we may extend the central pattern of $x$ into one that is $X$-forcing.
Thus $s$ is eventually $p$-periodic.
Since each word $s_{t+1}$ is determined by $s_t$ using the local rule of $f$, the eventually periodic part is reached when a repetition occurs.
The prefixes and suffixes of length $r$ of each word $s_t$ already form $p$-periodic sequences (since they are equal to $v_t)$, so this happens after at most $p|A|^{2 (r + h + n)}$ steps.

Let $v \in A^*$ be arbitrary.
By Lemma~\ref{lem:CombChar}, $v \in \lang(X)$ if and only if there is a cylinder set $[v']_i$ with $f^t([u' v' w']_{i-|u'|}) \cap [v] \neq \emptyset$ for all $u', w' \in A^*$ and infinitely many $t$.
By extending $v'$ if necessary, we may assume $[v']_i = [w u w]_{-|w|-n}$ for some $n \geq |v|$ and $u \in A^{2 n}$.
Then $v$ occurs infinitely often in words of the eventually periodic sequence $s$.
We have shown that $v \in \lang(X)$ if and only if there exist $n \geq |v|$ and $u \in A^{2 n}$ with $f^t([w u w]_{-|w|-n}) \cap [v] \neq \emptyset$ for some $t \in [T, T+p|A|^{2 (r + h + n)}+k+p)$, where $w$, $T$ and $h$ are fixed.
Hence $X$ has a $\Sigma^0_1$ language.
\end{proof}

\begin{proposition}
  \label{prop:Mixing}
  If a CA $f$ is the identity on $\glimset(f)$, then $\glimset(f)$ is a mixing subshift.
\end{proposition}

\begin{proof}
  Let $v_1, v_2 \in \lang(X)$ be arbitrary, and let $[w_1]_{i_1}, [w_2]_{i_2}$ be two cylinder sets that enable them and are $X$-forcing in the sense that $f^t([w_j]_{i_j}) \subset [\lang_{n + 2r}]_{-r}$ for all $t \geq T$.
  We may assume, by extending the $w_j$ and increasing $T$ if necessary, that $f^T(x)_{[0, n)} = v_j$ for all $x \in [w_j]_{i_j}$, and then $f^t([w_j]_{i_j}) \subset [v_j]$ for all $t \geq T$.
  For all large enough $N$ the intersection $[w_1]_{j_1} \cap [w_2]_{j_2 + N}$ is nonempty, and hence contains an $X$-forcing cylinder $[u]_k$ with $f^t([u]_k) \subset [\lang_{N + |v_2|}(X)]$ for all large enough $t$.
  This implies that $v_1 A^{N-|v_2|} v_2$ intersects $\lang(X)$ for all large enough $N$, i.e. $X$ is mixing.
\end{proof}

We say that a CA $f : X \to X$ on a subshift $X \subset A^\Z$ is \emph{eventually oblique} if $f^n$ has a neighborhood that is contained in $(-\infty, -1]$ or $[1, \infty)$ for some (equivalently, all large enough) $n \in \N$.
All shift maps except the identity are eventually oblique.

\begin{proposition}
\label{prop:Pi02Bound}
Let $f$ be a CA and suppose that its restriction to $\glimset(f)$ is eventually oblique.
Then $\glimset(f)$ has a $\Pi^0_2$ language.
\end{proposition}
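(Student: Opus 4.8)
The plan is to exploit the one-sidedness of $f$ on $X=\glimset(f)$ to cut down the quantifier complexity of the characterization in Lemma~\ref{lem:CombChar}, in the same spirit as Propositions~\ref{prop:Minimal} and~\ref{prop:EquicontSigma01}. The starting observation is that for a \emph{fixed} enabling word $v$ (and position $i$) the enabling condition ``for all $u,w$ there are infinitely many $t$ with $f^t([uvw]_{i-|u|})\cap[s]\neq\emptyset$'' is already $\Pi^0_2$: the inner ``infinitely many $t$'' is $\forall N\,\exists t>N$, and prefixing it with the universal quantifiers over $u,w$ keeps it $\Pi^0_2$. Thus the \emph{only} source of the $\Sigma^0_3$ bound is the leading existential quantifier over $v$, and the whole task reduces to eliminating, or canonicalizing, this quantifier under the obliqueness hypothesis. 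By reflecting the alphabet if necessary, I assume the neighborhood of $f^n|_X$ is contained in $[1,\infty)$, so that on $X$ information propagates strictly leftward at positive speed and the value of a cell under $f^{tn}$ depends only on a window of cells to its right; the finitely many intermediate powers spread information by a bounded amount and are absorbed by enlarging all windows by a constant.

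The device for removing the existential quantifier is the construction of \emph{semi-blocking words}, analogous to the two-sided blocking words of Proposition~\ref{prop:EquicontSigma01} but adapted to one-directional flow. First I would record the \emph{downstream irrelevance}: once a forcing cylinder (Lemma~\ref{lem:ForcingWords}) has made a window $X$-valid, the behavior of the origin inside that window is, by one-sidedness, independent of everything to its left, so the universal quantifier over the left context $u$ in Lemma~\ref{lem:CombChar} is vacuous and may be dropped. It then remains to treat the right (upstream) contexts together with the enabler. Here I would show, again via Lemma~\ref{lem:ForcingWords} and the one-sided dependence, that any enabler of $s$ extends to a semi-blocking enabler whose validity is witnessed by its own right-context up to the relevant light cone, so that the search for an enabler can be folded into the time parameter: membership $s\in\lang(X)$ becomes equivalent to a condition of the form ``for every right context $w$ there are infinitely many $t$ at which $s$ appears at the origin of $f^t$ applied to the canonically completed configuration,'' which is $\Pi^0_2$. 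I expect this folding step — turning the unbounded existential search for an enabler into a bounded search controlled by $t$ while preserving the genuine (comeager) enabling condition — to be the main obstacle, since a single finite word can never block the one-sided light cone forever, so the blocking must be understood relative to the time horizon and used in concert with forcing.

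For the second statement, assume $f|_X=\sigma^k$ with $k\neq 0$. The special feature of a shift is that on $X$ content is transported \emph{verbatim} rather than transformed, so any word injected upstream reappears, merely translated, downstream. To prove chain transitivity I must show that every SFT approximation $\sftap_m(X)$ is transitive, i.e.\ that any $a,b\in\lang_m(X)$ are joined by a word beginning with $a$, ending with $b$, and all of whose length-$m$ factors lie in $\lang_m(X)$. Given enabling cylinders for $a$ and for $b$, I would place both enablers in a single configuration, far apart and separated by a semi-blocking word so that neither destroys the other's enabling, and apply Lemma~\ref{lem:ForcingWords} to the combined cylinder. At a suitable large time the entire stretch between the two enabled regions is forced to be $X$-valid while simultaneously displaying $a$ on the left and $b$ on the right; this stretch is then a single word of $\lang(X)$ that starts with $a$ and ends with $b$, and sliding a length-$m$ frame across it exhibits the desired path in $\sftap_m(X)$. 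As $m$ is arbitrary, $X$ is chain transitive. The point where I expect to have to argue carefully is the \emph{simultaneous} appearance of $a$ and $b$ in one forced window, which is exactly where the semi-blocking word guarantees the coexistence of the two enablers and where the verbatim transport of the shift guarantees that the injected words actually materialize.
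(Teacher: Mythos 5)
Your reduction of the first claim to a canonical $\Pi^0_2$ enabling condition is the same opening move the paper makes: it shows that $w\in\lang(\glimset(f))$ if and only if the \emph{empty} word enables $w$, so the existential quantifier over enablers disappears. But you stop exactly where the work begins. The step you yourself flag as ``the main obstacle'' --- passing from the true enabler to an arbitrary cylinder --- is the entire content of the proof, and your description of it (``any enabler extends to a semi-blocking enabler whose validity is witnessed by its own right-context'') is not an argument. The paper's resolution is a swap argument: place the genuine enabler $u$ and the arbitrary word $u'$ at the same coordinates, follow them on the right by a forcing word $v$ (Lemma~\ref{lem:ForcingWords}) positioned so far to the right that the forced $X$-valid window lies outside the light cone of the swap at the time the forcing takes effect; from then on, obliqueness of $f^n$ on $X$-valid windows makes the set of coordinates on which the two orbits agree grow leftward at positive speed until it covers the origin, so $f^t(y)\in[w]$ whenever $f^t(x)\in[w]$ for large $t$. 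Note also that your claim that the left-context quantifier is ``vacuous'' by downstream irrelevance is not literally true before the forcing time (the full-shift radius is two-sided); it is harmless for the complexity count since that quantifier is universal, but it cannot be used as a proof step.

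For the second claim your argument breaks. You want a single configuration and a single time at which $a$ appears on the left and $b$ on the right of one forced $X$-valid stretch; that would place $a$ and $b$ inside a common word of $\lang(X)$ and hence prove transitivity of $X$, which is strictly stronger than the stated chain transitivity and is not what enabling can deliver. Enabling of $a$ and of $b$ each provides an infinite set of times, with a possibly different witnessing configuration for each time, and nothing forces these data to be compatible; a ``semi-blocking word'' between the two enablers does not help, because under one-sided propagation the contents at both target positions at time $t$ are determined by overlapping portions of the far right of the initial configuration and cannot be tuned independently. The paper instead realizes $v_1$ and $v_2$ at the \emph{same} position at two \emph{different} times $t_1<t_2$ inside one forcing cylinder (this is where the first part of the proposition is reused), and exploits $f|_X=\sigma^n$ to convert the time interval into a spatial chain: the words $u_k=f^{t_1+k}(x)_{[0,N+n)}$ all lie in $\lang(X)$ and consecutive ones overlap in $N$ letters, which is precisely a transitivity chain in $\sftap_N(X)$. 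That temporal-to-spatial conversion is the idea your proposal is missing.
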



\begin{proof}
Denote $X = \glimset(f)$ and let $w \in A^*$.
We claim that $w \in \lang(X)$ if and only if the empty word enables $w$ for $f$, which is a $\Pi^0_2$ condition.
By Lemma~\ref{lem:CombChar} it suffices to prove the forward direction, and the idea of the proof is the following.
Since $w$ occurs in $X$, it is enabled by some cylinder set, which we can extend into one that eventually forces a long segment to contain patterns of $X$.
On this segment, information can flow only from right to left under iteration of $f$.
We use another $X$-forcing cylinder to block all information flow from the enabling word to the right-hand side of this segment before it is formed.
Then the contents of the segment are independent of the word that originally enabled $w$, so we can swap it for any other word.
The argument is visualized in Figure~\ref{fig:Pi02Bound}.

\begin{figure}[ht]
\begin{center}
\begin{tikzpicture}

\draw (-3,0) -- (3.5,0);
\foreach \x in {-2,-1,0,2,3}{
  \draw (\x,-0.1) -- ++(0,0.2);
}
\draw [fill=black!25] (1,5) -- (1,0.5) -- (0.5,0.5) -- (0.5,2) -- (-1.5,2) -- (-1.5,5);
\draw [dashed] (0,0) -- (0.5,0.5);
\draw [dashed] (0.5,2) -- ++(-2,2);
\draw (-0.75,4.5) -- (-0.25,4.5);
\draw (-0.75,4.4) -- ++(0,0.2);
\draw (-0.25,4.4) -- ++(0,0.2);

\draw [dotted] (1,0.5) -- (3.5,0.5);
\draw [dotted] (0.5,2) -- (3.5,2);
\draw [dotted] (-1.5,4) -- (3.5,4);
\draw [dotted] (-1.5,4.5) -- (3.5,4.5);

\node [below] (ca) at (-1.5,0) {$\strut c a$};
\node [below,baseline=(ca.base)] at (-0.5,0) {$\strut u/u'$};
\node [below,baseline=(ca.base)] at (1,0) {$\strut v$};
\node [below,baseline=(ca.base)] at (2.5,0) {$\strut b d$};
\node [below,baseline=(ca.base)] at (-0.5,4.5) {$\strut w$};

\node [above=0.2cm] (j) at (-1,0) {$\strut j$};
\node [above=0.2cm,baseline=(j.base)] at (0,0) {$\strut k$};

\node [right] at (3.5,0.5) {$n T$};
\node [right] at (3.5,2) {$n T'$};
\node [right] at (3.5,4) {$K$};
\node [right] at (3.5,4.5) {$t$};

\end{tikzpicture}
\end{center}
\caption{Proof of Proposition~\ref{prop:Pi02Bound}, not drawn to scale. Time increases upward. In the shaded region, information flows only from right to left. The configurations $f^s(x)$ and $f^s(y)$ agree on the part that is right of the dashed line.}
\label{fig:Pi02Bound}
\end{figure}
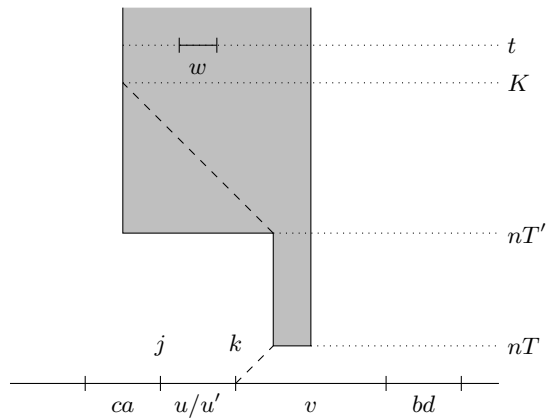

We assume without loss of generality that some $f^n$ has $[1, r]$ as a neighborhood on $X$, where $r \in \N$ is a common radius for each $f^h$ on $A^\Z$ for $0 \leq h \leq n$.
Lemma~\ref{lem:ForcingWords} gives us a cylinder set $[v]_i \subset A^\Z$ and $T \in \N$ with $f^t([v]_i) \subset [\lang_{3 r}(X)]$ for all $t \geq n T$.
By extending $v$ if necessary, we may assume $-|v| \leq i \leq -r n T$.

Assume $w \in \lang(X)$, so that there exists a cylinder set $[u]_j \subset A^\Z$ that enables it for $f$.
For the empty word to enable $w$, it suffices to show that for an arbitrary cylinder set $[u']_{j'}$ there exist infinitely many $t \in \N$ with $f^t([u']_{j'}) \cap [w] \neq \emptyset$.
By extending $u$ and/or $u'$ if necessary, we may assume $j = j'$ and $|u| = |u'| \geq |j|$, and denote $k = j + |u|$.
Consider the cylinder set $[u v]_j$.
By Lemma~\ref{lem:ForcingWords}, there exists a cylinder set $[a u v b]_{j - |a|}$ with $f^t([a u v b]_{j - |a|}) \subset [\lang_{3r+k-i}(X)]_{-2r}$ for all large enough $t \in \N$.
For the same reason, there exists another cylinder set $[c a u' v b d]_{j - |c a|}$ with $f^t([c a u' v b d]_{j - |c a|}) \subset [\lang_{3r+k-i}(X)]_{-2r}$ for all large enough $t \in \N$.
Let $T' \geq T$ be such that $n T'$ is a common bound for these conditions.
Denote $K = n T' + n |u v b d| - j$, and let $t \geq K$ be such that $f^t(x) \in [w]$ for some $x \in [c a u v b d]_{j - |c a|}$.
There are infinitely many such $t$ since $u$ enables $w$ for $f$.
Let $y \in [c a u' v b d]_{j - |c a|}$ be the configuration obtained by replacing the $u$ in $x$ by $u'$.

We claim $f^t(y) \in [w]$.
Note first that $f^{h n}(x)_{[k+s r, \infty)} = f^{h n}(y)_{[k+s r,\infty)}$ for all $h \leq T$ since this holds for $h = 0$ and $r$ is a radius for $f^n$.
This is represented in Figure~\ref{fig:Pi02Bound} by the lower dashed line.
Since $x, y \in [v]_k$, for all $s \geq n T$ we have $f^s(x), f^s(y) \in [\lang_{3 r}(X)]_{k-i}$.
From $i \leq -r n T$ it follows that $k - i \geq k + r n T$, so that $f^{n T}(x)$ and $f^{n T}(y)$ agree on $[k-i, \infty)$.
Since $f^n$ has $[-r, r]$ as a neighborhood on $A^\Z$ and $[1,r]$ as a neighborhood on $X$, for each $j \in [k-i+r, k-i+2r]$ the value of $f^{s+n}(z)_\ell$ for $z \in \{x,y\}$ depends only on $f^s(z)_{[\ell+1, \ell+r]}$.
Thus if $f^s(x)$ and $f^s(y)$ agree on $[k-i+r, \infty)$ for some $s \geq n T$, then so do $f^{s+n}(x)$ and $f^{s+n}(y)$.
By induction, we obtain $f^{n s}(x)_{[k-i+r, \infty)} = f^{n s}(y)_{[k-i+r, \infty)}$ for all $s \geq T$.

Denote $g(s) = \max(-r, k-i+r-s)$.
Then $f^{n (T' + s)}(x)$ and $f^{n(T' + s)}(y)$ agree on $[g(s), \infty)$ for all $s \geq 0$.
This is represented in Figure~\ref{fig:Pi02Bound} by the upper dashed line.
For $s = 0$ this is true by the previous paragraph, so suppose it holds for some $s \geq 0$.
Since $x \in [a u v b]_{j-|a|}$ and $y \in [c a u' v b d]_{j - |c a|}$, we have $f^{n (T' + s)}(x), f^{n (T' + s)}(y) \in [\lang_{3r+k-i}(X)]_{-2r}$.
As in the previous paragraph, the value of $f^{n (T' + s + 1)}(z)_\ell$ for $\ell \in [g(s)-1, k-i]$ and $z \in \{x,y\}$ depends only on $f^{n(T' + s)}(z)_{[\ell+1, \ell+r]}$.
The claim follows by induction.

Writing $t = n s + h$ for $0 \leq h < n$, the configurations $f^t(x)$ and $f^t(y)$ agree on $[g(s)+r, \infty) = [0, \infty)$ since $r$ is a radius for $f^h$.
In particular $f^t(y) \in [w]$, so that $f^t([u']_j) \cap [w] \neq \emptyset$.
\end{proof}

\begin{proposition}
  \label{prop:ChainTrans}
  If the restriction of a CA $f$ to $\glimset(f)$ is a shift map, then $\glimset(f)$ is a chain transitive subshift.
\end{proposition}

\begin{proof}
Suppose $f|_X = \sigma^n|_X$ for some $n \in \Z$.
By symmetry we may assume $n > 0$.
Let $v_1, v_2 \in \lang(X)$ be two words of equal length $m$, and let $N \geq m$ be arbitrary.
We claim that there exist $M > 0$ and words $u_0, \ldots, u_M \in \lang_{N+n}(X)$ such that $v_1$ is a prefix of $u_0$, $v_2$ is a prefix of $u_M$ and the length-$N$ suffix of each $u_i$ is a prefix of $u_{i+1}$, which implies the chain transitivity of $X$.
For this, let $[w]_j$ be an $X$-forcing cylinder with $f^t([w]_j) \subset [\lang_{2 r + N + n}(X)]_{-r}$ for all large enough $t$.
By the proof of Proposition~\ref{prop:Pi02Bound}, $[w]_j$ enables both $v_1$ and $v_2$, so there exist $x \in [w]_j$ and $T \leq t_1 < t_2$ with $f^{t_i}(x)_{[0, m)} = v_i$ for $i = 1, 2$.
Choose $M = t_2 - t_1$ and $u_k = f^{t_1 + k}(x)_{[0, N+n)}$.
Since $r$ is a radius for $f$, these words have the required properties.
\end{proof}

Using these results, we can prove that some individual subshifts cannot occur as generic limit sets.

\begin{example}
  \label{ex:0011}
There is no CA $f : A^\Z \to A^\Z$ whose generic limit set is the orbit closure of ${}^\infty 0 1^\infty$.
Suppose for a contradiction that there is one.
Since $X = \glimset(f)$ is invariant under $f$, we have $f({}^\infty 0 1^\infty) = \sigma^n({}^\infty 0 1^\infty)$ for some $n \in \Z$, and then $f|_X = \sigma^n|_X$.
If $n = 0$, then Proposition~\ref{prop:Mixing} implies that $X$ is mixing, and if $n \neq 0$, then Proposition~\ref{prop:ChainTrans} implies that $X$ is chain transitive, but it is neither.
\end{example}

By the results of~\cite{Ol13}, all cellular automata on Sturmian shifts are restrictions of shift maps.
Propositions~\ref{prop:EquicontSigma01} and~\ref{prop:Pi02Bound} imply that the language of a Sturmian generic limit set is $\Pi^0_2$, and Proposition~\ref{prop:Minimal} (or the folklore result that every minimal $\Pi^0_n$ subshift is $\Sigma^0_n$) implies that it is $\Sigma^0_2$.
Hence we obtain the following.

\begin{corollary}
\label{cor:SturmianUpperBound}
If a Sturmian shift is the generic limit set of a CA, then its language is $\Delta^0_2$.
\end{corollary}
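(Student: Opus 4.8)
The plan is to assemble the corollary from the three preceding propositions together with the structural input from \cite{Ol13}, so that no new construction is needed. Let $X$ be a Sturmian shift and suppose $f$ is a CA with $\glimset(f) = X$. By Lemma~\ref{lem:Invariant}, $X$ is a nonempty $f$-invariant subshift, so the restriction $f|_X$ is a well-defined CA on $X$. The first step is to invoke the result of \cite{Ol13}, according to which every CA on a Sturmian shift is the restriction of a shift map; hence $f|_X = \sigma^n|_X$ for some $n \in \Z$.

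Next I would split into two cases according to whether the restriction is the identity. If $n = 0$, then $f|_X = \mathrm{id}_X$ is equicontinuous, and Proposition~\ref{prop:EquicontSigma01} gives that $\lang(X)$ is $\Sigma^0_1$, hence in particular $\Pi^0_2$. If $n \neq 0$, then $\sigma^n|_X$ is eventually oblique, as every shift map other than the identity is, and Proposition~\ref{prop:Pi02Bound} shows that $\lang(X)$ is $\Pi^0_2$. In either case we obtain the upper bound $\lang(X) \in \Pi^0_2$.

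For the complementary bound I would use minimality. A Sturmian shift is minimal, so Proposition~\ref{prop:Minimal} applies directly and yields that $\lang(X)$ is $\Sigma^0_2$. (Alternatively, one could combine the $\Pi^0_2$ bound just obtained with the folklore fact that the language of a minimal $\Pi^0_n$ subshift is always $\Sigma^0_n$, but invoking Proposition~\ref{prop:Minimal} is more direct.) Combining the two bounds gives $\lang(X) \in \Pi^0_2 \cap \Sigma^0_2 = \Delta^0_2$, as claimed.

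I do not expect a genuine obstacle here, since the statement is a corollary whose content is entirely contained in the cited propositions; the only points requiring care are verifying that $f|_X$ is a legitimate CA on $X$ (so that \cite{Ol13} applies), and ensuring that the case split over $n$ exhausts all restrictions of shift maps. The one place where I would double-check the literature is the exact hypothesis of \cite{Ol13}, to confirm that it covers arbitrary CA on Sturmian shifts and not merely, say, surjective or reversible ones.
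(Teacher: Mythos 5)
Your proposal is correct and follows essentially the same route as the paper: cite \cite{Ol13} to reduce to a shift map, use Propositions~\ref{prop:EquicontSigma01} and~\ref{prop:Pi02Bound} (splitting on whether the shift exponent is zero) for the $\Pi^0_2$ upper bound, and Proposition~\ref{prop:Minimal} for the $\Sigma^0_2$ bound. The paper's own derivation is exactly this two-line combination, so there is nothing further to compare.
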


\section{Periodic factors}

In some situations, a nontrivial finite factor forbids a subshift from being realized as a generic limit set.

\begin{definition}
Let $X \subset A^\Z$ be a subshift.
The \emph{chain relation of width $n$} is the relation on $\lang_n(X)$ defined by $u \sim_n v$ if there exists $x \in X$ with $x_{[0, n)} = u$ and $x_{[k, k+n)} = v$ for some $k \geq 0$.
The symmetric and transitive closure of ${\sim_n}$ is the \emph{$\sigma^\pm$-chain relation of width $n$}.
If each ${\sim_n}$ is equal to $\lang_n(X)^2$, we say $X$ is \emph{$\sigma^\pm$-chain transitive}.
A \emph{$\sigma^\pm$-chain component} of $X$ is a maximal $\sigma^\pm$-chain transitive subshift of $X$.
\end{definition}

It is not hard to see that every subshift is the union of its $\sigma^\pm$-chain components, which are disjoint.
SFTs and sofic shifts have a finite number of such components, but in other cases their number may be infinite.

\begin{example}
  Let $X \subset \{0,1,2\}^\Z$ be the union of the orbit closures of ${}^\infty 0 2^\infty$ and ${}^\infty 1 2^\infty$.
  For each $n \in \N$, we have $0^n 2^n, 1^n 2^n \in \lang(X)$, which implies $0^p 2^{n-p} \sim_n 2^n$ and $1^p 2^{n-p} \sim_n 2^n$ for all $0 \leq p \leq n$.
  Since $\lang_n(X)$ consists of exactly these words, $X$ is $\sigma^\pm$-chain transitive.
\end{example}

\begin{lemma}
\label{lem:ChainComps}
Let $f$ be a CA on $A^\Z$ such that $\glimset(f)$ has a finite number of $\sigma^{\pm 1}$-chain components $X_1, \ldots, X_k$.
Then there is a cyclic permutation $\rho$ of $\{1, \ldots, k\}$ such that $f(X_i) = X_{\rho(i)}$ for each $i = 1, \ldots, k$.
\end{lemma}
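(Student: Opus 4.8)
The plan is to proceed in three stages: first show that $f$ sends each chain component into a single chain component, then use full invariance to promote this to a permutation $\rho$ of the components, and finally prove that $\rho$ is a single cycle by a topological $0$--$1$ law argument that exploits the minimality of $\glimset(f)$. Write $X = \glimset(f)$, a subshift with $f(X) = X$ by Lemma~\ref{lem:Invariant}. For the first stage I would fix a radius $r$ of $f$ and, for $n > 2r$, consider the central window map $\phi_n : \lang_n(X) \to \lang_{n-2r}(X)$ given by $\phi_n(x_{[0,n)}) = f(x)_{[r,n-r)}$, which is well defined since it depends only on the window. If $u \sim_n v$ is witnessed by $x \in X$, then applying $f$ to $x$ witnesses $\phi_n(u) \sim_{n-2r} \phi_n(v)$; hence $\phi_n$ carries each $\sigma^\pm$-chain class of $\lang_n(X)$ into a single $\sigma^\pm$-chain class of $\lang_{n-2r}(X)$. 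Since a point lies in $X_i$ exactly when, for every $m$, all of its length-$m$ factors belong to the chain class of $\lang_m(X)$ attached to $X_i$, this compatibility forces $f(X_i)$ into one component, which I would name $X_{\rho(i)}$.

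For the second stage, $f(X) = X$ together with the fact that the components partition $X$ gives $X = \bigcup_i f(X_i) \subseteq \bigcup_i X_{\rho(i)} \subseteq X$, so $\bigcup_i X_{\rho(i)} = X$; as the $X_j$ are disjoint and finite in number, $\rho$ must be surjective, hence a permutation of $\{1,\dots,k\}$. The inclusions $f(X_i) \subseteq X_{\rho(i)}$ are then pairwise disjoint and their union is $f(X) = X = \bigsqcup_j X_j$, so comparing forces each inclusion to be an equality $f(X_i) = X_{\rho(i)}$.

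The third stage is the heart of the matter and where I expect the main obstacle. Suppose $\rho$ is not a single $k$-cycle; then grouping the components of one cycle as $Y$ and the rest as $Z$ yields nonempty disjoint closed sets $X = Y \sqcup Z$ with $f(Y) = Y$, $f(Z) = Z$, and $\sigma Y = Y$, $\sigma Z = Z$. I would first establish the topological input that an $\omega$-limit set cannot be split by two disjoint closed invariant sets: if $\omega(x) \subseteq Y \sqcup Z$, then since $f(Y) \subseteq Y$, $f(Z) \subseteq Z$ and $\mathrm{dist}(Y,Z) > 0$, the orbit of $x$ gets trapped near whichever of $Y, Z$ it eventually approaches, forcing $\omega(x) \subseteq Y$ or $\omega(x) \subseteq Z$. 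Consequently $\realm(X) \subseteq \realm(Y) \cup \realm(Z)$, and since $\realm(X)$ is comeager, so is $\realm(Y) \cup \realm(Z)$.

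To finish, I would observe that $\realm(Y) = \{x : \omega(x) \subseteq Y\}$ has the Baire property: writing it as $\bigcap_m \bigcup_T \bigcap_{t \geq T} f^{-t}(U_m)$ for a neighborhood basis $U_m \downarrow Y$ exhibits it as an $F_{\sigma\delta}$ set, hence Borel. It is moreover $\sigma$-invariant, since $f\sigma = \sigma f$ and $\sigma Y = Y$ give $\omega(\sigma x) = \sigma\,\omega(x)$ and therefore $\sigma^{-1}\realm(Y) = \realm(Y)$. As $\sigma$ is topologically transitive on $A^\Z$, the topological $0$--$1$ law forces every $\sigma$-invariant set with the Baire property to be meager or comeager, so each of $\realm(Y), \realm(Z)$ is meager or comeager. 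They cannot both be meager, as their union is comeager; hence one of them, say $\realm(Y)$, is comeager. But then $Y$ is a closed set with comeager realm properly contained in $X = \glimset(f)$, contradicting the defining minimality of the generic limit set. Thus $\rho$ is cyclic. The routine bookkeeping of the first stage aside, the genuine difficulty is this last step: ruling out multiple cycles really needs the generic viewpoint, through the interplay of $\omega$-limit nonsplitting, the $\sigma$-invariance $0$--$1$ law, and the minimality of $\glimset(f)$.
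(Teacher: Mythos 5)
Your proof is correct, and its first two stages coincide with the paper's: the paper likewise notes that the CA-image of a $\sigma^{\pm}$-chain transitive subshift is $\sigma^{\pm}$-chain transitive (your window map $\phi_n$ just makes this explicit) and obtains surjectivity of $\rho$, hence the equalities $f(X_i)=X_{\rho(i)}$, from Lemma~\ref{lem:Invariant}. The divergence is in the third stage: the paper outsources cyclicity entirely to Corollary~4.12 of \cite{DjGu18}, whereas you prove it from scratch. Your chain of ideas---an $\omega$-limit set cannot meet both of two disjoint closed forward-invariant sets covering it, the realms $\realm(Y)$ and $\realm(Z)$ are Borel and $\sigma$-invariant, the topological zero--one law for the transitive shift forces one of them to be comeager, and the minimality defining $\glimset(f)$ then kills the other piece---is sound and is essentially the standard proof of the cited corollary, so you have inlined the black box rather than found a genuinely different route; what this buys is self-containedness. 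Two small points: the set $\bigcap_m\bigcup_T\bigcap_{t\ge T}f^{-t}(U_m)$ is $G_{\delta\sigma\delta}$ rather than $F_{\sigma\delta}$ (only the Baire property matters, so this is harmless), and the ``gets trapped'' step deserves one explicit line: choose open $U\supseteq Y$ and $V\supseteq Z$ with disjoint closures; if the orbit passed from $U$ to $V$ at infinitely many times $t$, a convergent subsequence of the corresponding $f^{t}(x)$ would yield $y\in\omega(x)\cap Y$ with $f(y)\in\overline{V}$, contradicting $f(Y)\subseteq Y$.
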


\begin{proof}
Since the image of a $\sigma^{\pm 1}$-chain transitive subshift by a cellular automaton is also $\sigma^{\pm 1}$-chain transitive, each $X_i$ is mapped into some other component $X_{\rho(i)}$.
This defines a function $\rho : \{1, \ldots, k\} \to \{1, \ldots, k\}$.
By Lemma~\ref{lem:Invariant}, $\rho$ is surjective, hence a permutation, and $f$ maps each $X_i$ surjectively to $X_{\rho(i)}$.
Corollary~4.13 in~\cite{DjGu19} implies that $\rho$ must be a cyclic permutation.
\end{proof}


\begin{proposition}
\label{prop:NoPeriod}
If a subshift $X \subset A^\Z$ has a finite number of $\sigma^{\pm 1}$-chain components and a finite factor that does not consist of fixed points, then it is not the generic limit set of any CA.
\end{proposition}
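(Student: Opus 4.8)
Suppose for contradiction that $X = \glimset(f)$ for some CA $f$, that $X$ has finitely many $\sigma^{\pm 1}$-chain components $X_1, \ldots, X_k$, and that $X$ has a finite factor $(Y, g)$ with nontrivial dynamics via a factor map $h : X \to Y$. Since $Y$ is finite and $g$ is nontrivial, $Y$ carries a nontrivial permutation action; after passing to a suitable sub-factor I may assume $Y = \Z/p\Z$ with $p \geq 2$ and $g(y) = y + 1$, so that $h$ assigns to each point of $X$ a ``phase'' in $\Z/p\Z$ that advances by one under $f$. The plan is to derive a contradiction by comparing this global period $p$ against the cyclic structure of the chain components given by Lemma~\ref{lem:ChainComps}.

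**Using the chain components and the phase.**
By Lemma~\ref{lem:ChainComps}, $f$ permutes $X_1, \ldots, X_k$ by a cyclic permutation $\rho$ of order $k$. First I would argue that $h$ is constant on each component: since $h$ is a factor map onto $\Z/p\Z$, the preimages $h^{-1}(y)$ are clopen (because $Y$ is finite and discrete), hence $X$ is partitioned into clopen $h$-fibers, and each $h$-fiber is $\sigma$-invariant because $h$ commutes with $\sigma$ via the trivial $\sigma$-action on the finite factor (a finite subshift factor has trivial shift action, so $h \circ \sigma = h$). A $\sigma^{\pm 1}$-chain transitive subshift cannot be split by a clopen $\sigma$-invariant set into two nonempty pieces, so each $X_i$ lies entirely in a single fiber; thus $h$ induces a well-defined phase $\phi : \{1, \ldots, k\} \to \Z/p\Z$ with $h(X_i) = \{\phi(i)\}$. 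Since $h \circ f = g \circ h$ and $f(X_i) = X_{\rho(i)}$, applying $h$ gives $\phi(\rho(i)) = \phi(i) + 1 \pmod p$ for every $i$.

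**The arithmetic contradiction.**
Iterating the relation $\phi(\rho(i)) = \phi(i) + 1$ around the cycle $\rho$ of length $k$ yields $\phi(i) = \phi(\rho^k(i)) = \phi(i) + k \pmod p$, hence $k \equiv 0 \pmod p$, so $p \mid k$. This alone is not yet contradictory, so the key additional point is that $\phi$ must be \emph{surjective}: because $h : X \to Y$ is surjective and $h$ takes only the values $\phi(1), \ldots, \phi(k)$, the image $\{\phi(i)\}$ must be all of $\Z/p\Z$. Combined with $\phi(\rho(i)) = \phi(i)+1$, the map $\phi$ factors the $k$-cycle $\rho$ onto the $p$-cycle $y \mapsto y+1$ on $\Z/p\Z$, which forces $\phi$ to be exactly $\phi(\rho^j(i_0)) = \phi(i_0) + j$, and surjectivity together with periodicity $p$ is automatically consistent. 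The genuine obstruction, and the step I expect to be hardest, is therefore not this arithmetic but establishing the clopenness/invariance dichotomy that pins $h$ down to a single value on each chain component in the first place: I must verify carefully that a finite factor of a subshift has trivial shift action and that $\sigma^{\pm 1}$-chain transitivity genuinely prohibits a nontrivial clopen invariant partition. Once the phase function $\phi$ is extracted, the contradiction comes from noting that a nontrivial finite \emph{dynamical} factor requires $p \geq 2$, while a careful accounting (e.g.\ that the chain components, being the maximal $\sigma^{\pm 1}$-chain transitive pieces, cannot themselves be permuted in a way compatible with both $\rho$ being a single $k$-cycle and $h$ being onto a nontrivial $\Z/p\Z$ unless the factor dynamics is trivial) forces $p = 1$, contradicting nontriviality. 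I would finish by making this last incompatibility precise, likely by showing that the cyclic action of $f$ on the finitely many components already exhausts the only possible finite-factor dynamics, so no \emph{additional} nontrivial finite factor can exist.
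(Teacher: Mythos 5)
There is a genuine gap, and it starts at the very first structural step. The finite factor in the statement is a factor of the subshift $(X,\sigma)$, i.e.\ a surjective continuous $\pi$ with $\pi\circ\sigma = g\circ\pi$ and $g\neq\mathrm{id}_Y$; the model example (used in the corollary right after the proposition) is a transitive non-mixing SFT factoring onto $\Z_p$ with $\sigma$ acting as $+1$. Your assertion that ``a finite subshift factor has trivial shift action, so $h\circ\sigma=h$'' is false, and in fact directly contradicts the hypothesis: if $h\circ\sigma=h$ with $h$ surjective, then $g=\mathrm{id}_Y$ and the factor has trivial dynamics. You have instead read the factor as a factor of $(X,f)$ with $h\circ f = h+1$ and $h\circ\sigma=h$; under the correct reading $h$ is \emph{not} constant on $\sigma^{\pm1}$-chain components (only the $g$-orbit of $h(x)$, and the displacement $q(x)$ with $\pi(f(x))=\pi(x)+q(x)$, are constant there), so your phase function $\phi$ on components does not exist and the whole second step collapses.

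Even granting your setup, the argument never closes: you correctly derive $p\mid k$, correctly note that this is not a contradiction, and then defer the contradiction to ``a careful accounting'' that is never carried out. No contradiction is available from the cycle structure of the components alone --- nothing prevents a CA from cyclically permuting $k$ components while admitting a $\Z_p$ factor of $(X,f)$ with $p\mid k$. Crucially, your proposal never uses the hypothesis that $X$ is a \emph{generic limit set}. The paper's proof does: it invokes Lemmas~\ref{lem:CombChar} and~\ref{lem:ForcingWords} to build a configuration containing two copies of an $X$-forcing word at distance $pm+1\equiv 1\pmod p$. Tracking the evolution shows the two windows always carry equal $\pi$-phases (they start in the same component with the same phase, and $q$ is constant on components, which $f$ permutes), while for large $t$ they sit inside a single word of $\lang(X)$, across which the phase must advance by $1$ per coordinate and hence differ by $1\bmod p$ between the two windows. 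That tension between the $\sigma$-equivariance of $\pi$ and the $f$-dynamics is the actual content of the proof, and it is entirely absent from your proposal.
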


\begin{proof}
  Suppose that $f$ is a CA with $\glimset(f) = X$, and let $\pi : (X, \sigma) \to (Y, g)$ be the morphism onto a nontrivial finite factor.
Let $X_1, \ldots, X_k$ be the $\sigma^{\pm 1}$-chain components of $X$.
By Lemma~\ref{lem:ChainComps} we may assume $f(X_i) = X_{i+1 \bmod k}$ for all $i$.
If $Y_p \subset Y$ is the subsystem of $p$-periodic points, then $\pi^{-1}(Y_p) \subset X$ is an $f$-invariant subshift consisting of $\sigma^{\pm 1}$-chain components, so it is nonempty for exactly one $p$, and $p > 1$ by the assumption that $Y$ does not consist of fixed points.
By taking a factor map from $Y$ onto $\Z_p$ if necessary, we may assume $(Y, g) = (\Z_p, {+1})$ where the addition is modulo $p$.
Define $q : X \to \Z_p$ by $q(x) = \pi(x) - \pi(f(x))$.
Then $q$ is continuous and shift-invariant, and is constant in each component $X_i$.
Let $r$ be a common radius for $f$ and right radius for $\pi$ and $q$, meaning that $\pi(x)$ and $q(x)$ are determined by $x_{[0,r)}$.
The right radii exist since $\pi$ has some two-sided radius $s$ by continuity and satisfies $\pi(x) = \pi(\sigma^s(x)) - s$, which is determined by $x_{[0, 2s]}$, and similarly for $q$.
For $w \in \lang_{2r+1}(X)$, denote $\pi(w) = \pi(x)$ and $q(w) = q(x)$ for any $x \in [w]_{-r}$.

Let $u \in \lang_{3 r}(X_1)$ be arbitrary, let $[v]_i$ be a cylinder that enables it given by Lemma~\ref{lem:CombChar}, and let $[w]_j \subset [v]_i$ be an $X$-forcing cylinder given by Lemma~\ref{lem:ForcingWords}, so that $f^t([w]_j) \subset [\lang_{3 r}(X)]$ for all $t \geq T$.
We may assume that $f^T([w]_j) \subset [u]$ by extending $w$ if necessary.
Let $m \in \N$ be such that $[w]_j \cap [w]_{j + p m + 1}$ is nonempty, which holds for all large enough $m$.
Finally, let $[v']_\ell \subset [w]_j \cap [w]_{j + p m + 1}$ be an $X$-forcing cylinder with $f^t([v']_\ell) \subset [\lang_{3 r + p m + 1}(X)]$ for all large enough $t$.

Let $x \in [v']_\ell$ be arbitrary.
Then we have $f^T(x) \in [u] \cap [u]_{p m + 1}$ and $f^t(x) \in [\lang_{3 r}(X)] \cap [\lang_{3 r}(X)]_{p m + 1}$ for all $t \geq T$.
This implies $\pi(f^{t+1}(x)_{[r, 2r)}) = \pi(f^t(x)_{[r, 2r)}) + q(f^t(x)_{[r, 2r)})$ and $\pi(f^{t+1}(x)_{p m + 1 + [r, 2r)}) = \pi(f^t(x)_{p m + 1 + [r, 2r)}) + q(f^t(x)_{p m + 1 + [r, 2r)})$ for each $t \geq T$.
Since $u \in \lang_{3 r}(X_1)$, $f$ permutes the components $X_i$ and $q$ is constant in each component, we have $\pi(f^t(x)_{[r, 2r)}) = \pi(f^t(x)_{p m + 1 + [r, 2r)})$ for $t \geq T$.
For large enough $t$, this is a contradiction with the fact that $w^{(t)} = f^t(x)_{[0, p m + 3 r]} \in \lang(X)$ satisfies $\pi(w^{(t)}_{[i, i+r)}) = \pi(w^{(t)}_{[0, r)}) + i$ for all $i \in [0, p m + 2 r]$.
\end{proof}

As a corollary, a transitive but nonmixing SFT cannot be the generic limit set of a CA, since it admits a factor map to some $\Z_p$ with $p > 1$.

\section{Future work}

In this article we presented a construction for a generic limit set with a maximally complex language.
We also showed that structural and dynamical properties may constrain this complexity, but did not prove the strictness of these bounds.
Furthermore, we showed that in some cases a global period is an obstruction for being a generic limit set, but did not prove whether this always holds.
Using a construction technique presented in~\cite{BoDePoSaTh15} to study $\mu$-limit sets, we believe we can answer these and other questions.

A cellular automaton $f$ is nilpotent, meaning that $f^t(A^\Z)$ is a singleton set for some $t \in \N$, if and only if its limit set is a singleton~\cite{CuPaYu89}.
As a variant of nilpotency, one may define $f$ to be \emph{generically nilpotent} if $\glimset(f)$ is a singleton, as in Example~\ref{ex:GenNilp}.
We believe the techniques of~\cite{BoDePoSaTh15} can also be used to characterize the complexity of deciding generic nilpotency.


\bibliographystyle{splncs04}
\bibliography{GenericsBib}

\end{document}